\newcommand{\uld}[1]{\underline{d#1}}
\newcommand\nlc{\eta}
\newcommand\obs{p}
\newcommand\uvec{\vec{u}}
\newcommand\obsvec{\vec{\obs}}
\newcommand\rvec{\vec{r}}
\newcommand\N{\mathbb{N}}
\newcommand\XXX{\mathbb{X}}
\newcommand\YYY{\mathbb{Y}}
\newcommand\pole{\mathfrak{p}}
\newcommand\no{n}
\newcommand\No{N}
\newcommand\orti{\mathfrak{s}}
\newcommand\slo{\sigma}
\newcommand\Chat{\hat{C}}
\newcommand\Ccheck{\check{C}}
\newcommand\tint{{\textstyle \int\!}_t\,}
\newcommand\Rho{G}
\newcommand\Tinv{\mathcal{T}_\dagger}
\renewcommand\Re{\mathrm{Re}}
\definecolor{darkcyan}{rgb}{0.,0.5,0.5}
\definecolor{darkviolet}{rgb}{0.5,0.,0.5}
\definecolor{darkgreen}{rgb}{0.,0.7,0.}
\definecolor{purple}{rgb}{0.7,0.,0.7}
\newcommand{\marginrr}[1]{}
\newcommand{\marginR}[1]{}
\newcommand{\marginre}[1]{}
\title{Imaging nonlinearity coefficient and sound speed with the JMGT equation in frequency domain\thanks{Submitted to the editors DATE.
\funding{This research was funded in part by the Austrian Science Fund (FWF) 
[10.55776/P36318].}}}
\author{Barbara Kaltenbacher\thanks{University of Klagenfurt, Austria 
  (\email{barbara.kaltenbacher@aau.at}, \url{https://me.aau.at/~bkaltenb/}).}
}
\begin{document}

\maketitle

\begin{abstract}
In this paper we prove uniqueness and stability of reconstruction of two coefficients (sound speed and nonlinearity parameter) in the Jordan-Moore-Gibson-Thompson JMGT equation of nonlinear acoustics, relying on observations resulting from only two sources.
A key tool for this purpose is a multiharmonic expansion of the PDE solution, which reflects the physical phenomenon of higher harmonics appearing due to nonlinearity and allows us to work in frequency domain. 
Based on this result, we derive a regularization property of reconstruction with JMGT as the relexation time tends to zero (in the spirit of a quasi reversibility method) for reconstruction from the classical Westervelt equation.
\end{abstract}

\begin{keywords}
coefficient identification, uniqueness, stability, acoustic nonlinearity parameter tomography, Jordan-Moore-Gibson-Thompson equation, quasi reversibility
\end{keywords}

\begin{MSCcodes}
35R30, 
35B30, 
76N30  
\end{MSCcodes}

\section{Introduction}
Taking into account and even exploiting nonlinear wave propagation in quantitative ultrasound imaging is a topic that has recently found much interest, due to its potential of increasing diagnostic value, while reducing measurement effort.
 
Acoustic nonlinearity parameter tomography ANT~\cite{nonlinparam1, duck2002nonlinear,  nonlinparam3, nonlinparam2, ZHANG20011359} relies on the tissue specific and thus spatially varying nonlinearity coefficient as an imaging quantity, while ultrasound tomography UST~\cite{Lucka2022,schmitt2002ultrasound} reconstructs linear tissue properties such as the sound speed or the attenuation as a function of space.  
We will here study a combination of both.

The underlying mathematical analysis involves partial differential equation PDE models in which these imaging quantities appear as coefficients.
A classical model of nonlinear acoustics is the Westervelt equation, a second order in time quasilinear strongly damped wave equation that due to the nonlinearity exhibits potential nonlinearity. 
We refer to, e.g., \cite{
periodicWestervelt,periodicWest_2} for results on its well-posedness in the time-periodic setting relevant here and to ~\cite{AcostaUhlmannZhai2021,AcostaPalacios2025,Eptaminitakis:Stefanov:2023,nonlinearity_imaging_Westervelt,nonlinearity_imaging_fracWest,nonlinearity_imaging_2d}
for recent work on the inverse problem of ANT with the Westervelt equation.
Since this conventional model exhibits the infinite speed of propagation paradox, advanced models have been studied. Among these is the Jordan Moore Gibson Thompson JMGT equation, that by employing the Maxwell-Cattaneo law of heat conduction (rather than the Fourier one), thus adding a relaxation time term, amends this unphysical effect. Well-posedness of the time-periodic JMGT equation has been studied in \cite{nonlinear_imaging_JMGT_freq} and the inverse problem of ANT with the JMGT equation has been investigated in, e.g., ~\cite{nonlinearity_imaging_JMGT,nonlinear_imaging_JMGT_freq}, where ANT has been formulated in time and frequency domain, for the latter purpose using a multiharmonic expansion of the PDE solution 
\cite{BachingerLangerSchoeberl:2005,periodicWestervelt,periodicWest_2,nonlinear_imaging_JMGT_freq}
There it was shown that nonlinear interaction of ultrasound waves renders data from a single source rich enough to uniquely determine the nonlinearity coefficient as a function of two or three dimensional space. This is in stark contrast to the linear wave setting, where the analog reconstruction task (for, e.g., the sound speed) requires observations resulting from infinitely many sources, namely, the whole Neumann-to-Dirichlet map. 

A key message of this paper is that this ``blessing of nonlinearity'' \cite{KurylevLassasUhlmann2018} enables reconstruction of not only the nonlinearity coefficient but also of linear parameters such as the sound speed from measurements generated by few (more precisely, two) sources. 

Our second main contribution is a substantiation of the intuitive insight that introducing a relaxation time term not only leads to a physically more meaningful model, but also remedies the severe loss of information due to strong damping present in the Westervelt equation, thus allowing us to use the relaxation time as a regularization parameter for the inverse problem, in the sense of a quasi reversibility regularization method~\cite{AmesClark_etal:1998,ClarkOppenheimer:1994,LattesLions:1969,Showalter:1974a,Showalter:1974b,Showalter:1976} and \cite[Section 8.3]{frac_book}.

The analysis is here carried out in frequency domain, as this is not only relevant in practical settings of continuous wave excitation, but also allows to formulate the arguments in a very transparent way. In this setting, the multiplication of information through nonlinearity naturally shows up by the apperance of contributions at multiples of the fundamental frequency, so-called higher harmonics.

\medskip

We consider nonlinear ultrasound propagation as described by the JMGT equation
\begin{equation}\label{JMGT-Westervelt}
\tau u_{ttt}+\frac{1}{c^2} u_{tt}-\Delta u - \beta \Delta u_t - \nlc (u^2)_{tt} +f=0 \ \text{ in }(0,T)\times\Omega
\end{equation}
with time periodicity conditions
\begin{equation}\label{periodic}
u(T)=u(0), \quad u_t(T)=u_t(0)
, \quad \tau u_{tt}(T)=\tau u_{tt}(0)
\end{equation}
where the sound speed $c=c(x)$ and the nonlinearity coefficient $\nlc=\nlc(x)$ depend on the tissue type, thus vary in space and can be used as imaging quantities, while the relaxation time $\tau>0$ and the attenuation coefficient $\beta>0$ are considered as constants.
Typically $\beta\geq\tau c^2$ holds, which is in fact a requirement for stability of the initial value probem (instead of the periodicity one \eqref{periodic}) for the PDE \eqref{JMGT-Westervelt}, cf., e.g., \cite{BongartiCharoenphonLasiecka20,DellOroPata,KLP12_JordanMooreGibson,JMGT,LasieckaWang15b,MarchandMcDevittTriggiani12,PellicerSolaMorales} .
The above PDE \eqref{JMGT-Westervelt} is supposed to hold on a smooth bounded domain $\Omega\subseteq\mathbb{R}^d$, $d\in\{2,3\}$ and complemented with boundary conditions
\begin{equation}\label{bndy}
\begin{aligned}
&\partial_\nu u
+\gamma u =0 \mbox{ on }
\Gamma_i\cup\Gamma_N\\
&u =0 \mbox{ on }\Gamma_D\\
&\text{ where $\gamma=0$ on $\Gamma_N$, $\tfrac{1}{\gamma}\vert_{\Gamma_i}\in L^\infty(\Gamma_i)$, 
}
\quad \overline{\Gamma_i}\cup\overline{\Gamma_N}\cup\overline{\Gamma_D}=\partial\Omega.
\end{aligned}
\end{equation}

The quantitative imaging task under consideration amounts to reconstructing the space dependent coefficients $c(x)$, $\nlc(x)$ in \eqref{JMGT-Westervelt}, \eqref{periodic}, \eqref{bndy} from 
$\No\geq2$ 
observations of the acoustic pressure 
\begin{equation}\label{observation}
\obs_\no(t,x_0) = u(t,x_0), \quad(t,x_0)\in(0,T)\times \Sigma, \quad \no\in\{1,\ldots,\No\},
\end{equation}
where $\Sigma\subseteq\overline{\Omega}$ is s smooth $d-1$ dimensional manifold, modeling, e.g. an array of piezoelectric transducers or hydrophones.
For simplicity of exposition, we remain with $\No=2$ here; indeed we will show that two observations suffice to uniquely recover the two coefficient functions $c(x)$, $\nlc(x)$.

The two sources $f_1$, $f_2$ that we use for this purpose can be excited from a lower dimensional manifold $\Gamma\subseteq\overline{\Omega}$, e.g., some boundary part, as practically relevant, cf. Remark~\ref{rem:u0separable}, using the known physical effect of nonlinear wave interaction giving rise to interior sources. 
By simply setting $f_2$ to be a multiple of $f_1$, we exploit another aspect of nonlinearity, namely the fact that amplitude modulation yields additional information (as opposed to the linear superposition principle).

The limiting case $\tau=0$ in \eqref{JMGT-Westervelt} corresponds to the classical Westervelt equation. While being a widely used model, it has the following two drawbacks: First of all, it exhibits the above mentioned infinite speed of propagation paradox; secondly (and related to the first), the severe ill-posedness of the inverse problem, as a consequence of information loss due to strong attenuation.

The main outcomes of this paper are as follows.
\begin{itemize}
\item to prove local uniqueness and conditional stability of $c^2$ and $\nlc$ as functions of space simultaneously from two boundary observations, see Section~\ref{sec:uniqueness-nl};
\item to establish $(c^2,\nlc)$ imaging with JMGT as a regularization method with $\tau$ as regularization parameter, in the spirit of quasi reversibility, see Section~\ref{sec:quasireversibility}. 
\end{itemize}
The local uniqueness result relies on quantified linearized stability and uniqueness in appropriately chosen norms, see Section~\ref{sec:uniqueness-lin}.

\begin{remark}[recovery of further space-dependent coefficients]\label{rem:complexwavespeed_rho}
Considering $u$ as a complex valued quantity, thereby combining $c(x)$ and $\beta(x)$ into a single but complex valued ``lossy wave speed'', one could additionally recover the attenuation $\beta$ as a function of space; we point to \cite{AcostaPalacios2025}, where the observation setting is different from here, though: In \cite{AcostaPalacios2025}, the whole Robin-to-Dirichlet map for the first and second harmonic is considered, whereas here we only use two boundary observations and all higher harmonics (as realized by observations over the time interval).
Alternatively, sources at different fundamental frequencies can be used to uniquenly recover $c(x)$, $\beta(x)$ and $\nlc(x)$ simultaneously, \cite{periodicWest_3}.

To also take into account spatial variations of the mass density $\rho$, we could as well use $\Delta_\rho=\rho\nabla\cdot(\frac{1}{\rho}\nabla\,\cdot)$ in place of $\Delta$.
Recovery of information on $\rho(x)$ could rely on recovery of the eigenvalues of $\Delta_\rho$ 
(which can be obtained from locations of the poles $\pole_\ell$, see \eqref{poles_asymp} below) 
and application of inverse spectral results; note however, that the eigenvalue sequence alone does not determine the coefficient uniquely, see, e.g., \cite{ChadanColtonPaivarintaRundell:1997,KatchalovKurylevLassas:2001} and the references therein. 
Also the particular form $\rho\nabla\cdot(\frac{1}{\rho}\nabla\,\cdot)$ of the operator reveals the fact that  uniqueness of $\rho$ can only hold up to a constant scaling factor.
\end{remark}

Introducing the squared slowness
\[
\slo(x)=c^{-2}(x)
\]
where $\slo(x)\beta\geq\tau$ for almost every $x\in\Omega$ and 
the Robin Laplacian 
\begin{equation}\label{eqn:calADM}
\begin{aligned}
&\mathcal{A}u = \Bigl(v\mapsto \int_\Omega\nabla u\cdot\nabla v\, dx +\gamma\int_{\partial\Omega} u\, v\, ds\Bigr),
\end{aligned}
\end{equation}
we can write the inverse problem 
as 
\\
Reconstruct $\slo(x)$, $\nlc(x)$ in 
\begin{equation}\label{IP}
\begin{aligned}
&\text{model:}&&{}
[\tau\partial_t^3+\slo\partial_t^2 + \mathcal{A} + \beta \mathcal{A}\partial_t]u_\no - \nlc\, (u_\no^2)_{tt} = r_{\no,tt}\\
&&&{} 
u_\no(T)=u_\no(0), \quad u_{\no,t}(T)=u_{\no,t}(0), \quad 
\tau u_{\no,tt}(T)=\tau u_{\no,tt}(0)
\\
&\text{observation:}&& u_\no(x_0,t)=\obs_\no(x_0,t) \quad x_0\in\Sigma,
\\
&\no\in\{1,2\}.
\end{aligned}
\end{equation}

Note that the excitations naturally (to match physical units, cf. \cite{periodicWest_2}) appear as the second time derivative of pressure sources $r_\no$; this also equips them with sufficient strength in terms of the order in which they appear in the PDE.


\medskip

Relying on existence of periodic solutions \cite{periodicJMGT}, for a (multi-)harmonic excitation
$r_\no(x,t)=\Re\left(\sum_{k=1}^\infty \hat{r}_k(x) e^{\imath k \omega t}\right)$ (with $\omega=\frac{2\pi}{T}$), 
we can write the solution to \eqref{JMGT-Westervelt} via a multiharmonic ansatz
\begin{equation}\label{u_multiharmonic}
u_\no(x,t)
= \Re\left(\sum_{k=1}^\infty \hat{u}_{\no,k}(x) e^{\imath k \omega t}\right),
\end{equation}
Thus, we definine the forward operator and data by
\[
F_{\no,m}:= \left(\begin{array}{l}
F_{\no,m}^{mod} \\
F_{\no,m}^{obs}
\end{array}\right), \qquad
y_{\no,m}:= \left(\begin{array}{l}
\hat{r}_{\no,m} \\
\hat{\obs}_{\no,m}
\end{array}\right),
\]
with 
\begin{subequations}\label{IPfreq}
\begin{alignat}{2}
&
F_{\no,m}^{mod}(\slo,\nlc,\uvec_\no):=
\mathcal{L}_m(\slo)\hat{u}_{\no,m} +\nlc\, \mathcal{B}_m(\uvec_\no,\uvec_\no) 
\label{IPfreq_mod}\\
&
F_{\no,m}^{obs}(\slo,\nlc,\uvec_\no):= 
\text{tr}_\Sigma\hat{u}_{\no,m} 
\label{IPfreq_obs}
\end{alignat}
\end{subequations}
for all $m\in\mathbb{N}$, $\no\in\{1,2\}$, where $\text{tr}_\Sigma$ denotes the trace operator, applied pointwise almost everywhere in time, and we abbreviate the multiplication operator $v\mapsto \nlc\,v$ by just writing the multiplier $\nlc$. 
Moreover,
\begin{equation}\label{Bm}
\begin{aligned}
&\uvec=(\hat{u}_m)_{m\in\mathbb{N}}, \quad \hat{u}_m(x)={\frac{2}{T}}\int_0^T  u(x,t)\,e^{-\imath m \omega t}\, dt, \ x\in\Omega,\\
&\mathcal{L}_m(\slo) = \frac{1}{m^2\omega^2}
[\imath m^3\omega^3\tau+m^2\omega^2\slo
{-} \mathcal{A} {-} \beta \imath m\omega \mathcal{A}]\\
&\mathcal{B}_m(\uvec,\vec{v}) = 
\frac12 \sum_{\ell=1}^{m-1} \hat{u}_\ell \hat{v}_{m-\ell}
+\frac12 \sum_{k=1}^\infty\overline{\hat{u}_{k}} \hat{v}_{k+m} 
+\frac12 \sum_{k=1}^\infty\hat{u}_{m+k} \overline{\hat{v}_{k}}
\\
&\obsvec=(\hat{\obs}_m)_{m\in\mathbb{N}}, \quad 
\hat{\obs}_m(x_0)={\frac{2}{T}}\int_0^T  \obs(x_0,t)\,e^{-\imath m \omega t}\, dt , \ x_0\in\Sigma.
\end{aligned}
\end{equation}
With this, we can write the inverse problem as an operator equation in frequency domain.
\begin{equation}\label{Fnlcuvecy}
\vec{F}(\slo,\nlc,(\uvec_\no)_{\no=1}^2) = \vec{y}.
\end{equation}

The corresponding time domain formulation is  
\begin{equation}\label{Fnlcuvecy_timedom}
F(\slo,\nlc,(u_\no)_{\no=1}^2) = y
\end{equation}
where 
\[
F_{\no}:= \left(\begin{array}{l}
F_{\no}^{mod} \\
F_{\no}^{obs}
\end{array}\right), \qquad
y_{\no}:= \left(\begin{array}{l}
r_{\no} \\
\obs_{\no}
\end{array}\right)
\]
\begin{equation}\label{IPtime}
\begin{aligned}
&F_\no^{mod}(\slo,\nlc,u_\no):=
\mathcal{L}(\slo)u_{\no} +\nlc\, u_\no^2 \\
&F_\no^{obs}(\slo,\nlc,u_\no):= 
\text{tr}_\Sigma u_\no ,\\
&\mathcal{L}(\slo) = -\tau \partial_t - \slo - (\beta\tint +\tint\tint)\mathcal{A},
\end{aligned}
\end{equation}
where $(\tint v)(t)=\int_0^t v(t')\, dt'$ and we equip $\mathcal{L}(\slo)$ with periodicity conditions 
$u(T)=u(0)$.

\medskip

\subsubsection*{Some Notation}
By compactness and symmetry arguments (e.g., 
\cite{EvansBook})
one can show existence of an eigensystem $(\lambda^j,(\varphi^{j,k})_{k\in K^j})_{j\in\N}$ with $\lambda^j\nearrow\infty$ as $j\to\infty$ of the operator $\mathcal{A}$ that allows us to diagonalize it and to define a norm that is equivalent to the $H^s(\Omega)$ Sobolev norm as well as the eigenspaces $\mathbb{E}^j$
\begin{equation}\label{eigensystem}
\begin{aligned}
&\mathcal{A}\varphi^{j,k}=\lambda^j\varphi^{j,k}, \quad \langle \varphi^{j,k},\varphi^{i,\ell}\rangle=\delta_{ji}\delta_{k\ell}, \\
&\|v\|_{H^s(\Omega)}=\left(\sum_{j\in\N} (\lambda^j)^s\sum_{k\in K^j} |\langle \varphi^{j,k},v\rangle|^2\right)^{1/2},\\
&\mathbb{E}^j=\text{span}\{\varphi^{j,k}\,:\, k\in K^j\},
\end{aligned}
\end{equation}
where $\langle \cdot,\cdot\rangle$ denotes the $L^2(\Omega)$ inner product.\\
Correspondingly, we will use the Bochner-Sobolev space norms defined by
\begin{equation}\label{BSnorms}
\begin{aligned}
&\|v\|_{H^{\orti}(0,T;H^{s}(\Omega))}
=\left(\sum_{m\in\N} |m\omega|^{2\orti} \sum_{j\in\N} (\lambda^j)^s\sum_{k\in K^j} \left|\frac{2}{T}\int_0^T\langle \varphi^{j,k}, v(\cdot,t) \rangle\,e^{-\imath\omega t}\,dt\right|^2\right)^{1/2}
\\
&\|\vec{v}\|_{h^{\orti}(H^{s}(\Omega))}
=\left(\sum_{m\in\N} |m\omega|^{2\orti} \sum_{j\in\N} (\lambda^j)^s\sum_{k\in K^j} \left|\langle \varphi^{j,k}, \hat{v}_m \rangle\right|^2\right)^{1/2},
\end{aligned}
\end{equation}
where $H^{\orti}(0,T)$ denotes a Soloblev space of time periodic functions here. 
Note that with \eqref{u_multiharmonic}, we are also doing an eigenfunction expansion with respect to time. In fact, most of the analysis in this paper will work with space-time discretizations by tensor products of eigenfunctions. On the spatial side, this could be generalized to nonsymmetric operators $\mathcal{A}$, using the techniques from, e.g.,  \cite{JiangLiPauronYamamoto2023}. 

\subsubsection*{Main Results}
The first key result is local stability and uniqueness near a point $(\slo^0,\nlc^0,\uvec^0)$, where locality is quantified by the distance 
\begin{equation}\label{def_d}
\begin{aligned}
&d((\tilde{\slo},\tilde{\nlc},(\tilde{\uvec}_\no)_{\no=1}^2),(\slo^0,\nlc^0,(\uvec_\no^0)_{\no=1}^2))\\
&=\|(\tilde{\uvec}_\no)_{\no=1}^2-(\uvec^0_\no)_{\no=1}^2\|_{h^{\check{\orti}}(W^{\check{s},d/s}(\Omega)\cap L^{d/\check{\orti}}(\Omega))^2}
+\|\tilde{\slo}-\slo^0\|_{W^{\check{s},d/(s-\check{\orti})}(\Omega)\cap L^\infty(\Omega)} \\
&+\|(\mathcal{B}_m(\tilde{\uvec}_\no,\tilde{\uvec}_\no))_{\no=1}^2-(\mathcal{B}_m(\uvec^0_\no,\uvec^0_\no))_{\no=1}^2\|_{h^{\check{\orti}}(W^{\check{s},d/s}(\Omega)\cap L^{d/\check{\orti}}(\Omega))^2}\\
&+2\|(\tilde{\nlc}-\nlc^0)(\tilde{\uvec}_\no)_{\no=1}^2\|_{(h^{\check{\orti}}\cap\ell^1)(W^{\check{s},d/(s-\check{\orti})}(\Omega)\cap L^\infty(\Omega))^2}
\end{aligned}
\end{equation}
and an open neighborhood of $\xi^0=(\slo^0,0,(\uvec_\no^0)_{\no=1}^2)$ by  
\begin{equation}\label{Uc}
\begin{aligned}
U_c=\{&\tilde{\xi}=(\tilde{\slo},\tilde{\nlc},(\tilde{\uvec}_\no)_{\no=1}^2)\in\XXX\, :\, 
d((\tilde{\slo},\tilde{\nlc},(\tilde{\uvec}_\no)_{\no=1}^2),(\slo^0,0,(\uvec_\no^0)_{\no=1}^2))
< c\}.
\end{aligned}
\end{equation}

\begin{theorem}\label{thm:nlstab_mainresults}
There exist $(\slo^0,\nlc^0,(\uvec^0_\no)_{\no=1}^2)$ and $C(\tau)\,>0$ such that the stability estimate
 \begin{equation}\label{stabest_mainresults}
\begin{aligned}
&\|(\tilde{\slo},\tilde{\nlc},(\tilde{\uvec}_\no)_{\no=1}^2)-(\slo,\nlc,(\uvec_\no)_{\no=1}^2)\|_{H^s(\Omega)^2\times h^{\check{\orti}}(H^{s-\check{\orti}}(\Omega))^2}\\
&\leq C(\tau)\, 
\|\vec{F}((\tilde{\slo},\tilde{\nlc},(\tilde{\uvec}_\no)_{\no=1}^2))-\vec{F}((\slo,\nlc,(\uvec_\no)_{\no=1}^2))\|_{h^{\check{\orti}}(H^{s-\check{\orti}}(\Omega))^2\times\YYY^{obs}} 
\end{aligned}
\end{equation}
holds for all $(\slo,\nlc,\uvec)$, $(\tilde{\slo},\tilde{\nlc},\tilde{\uvec})\in U_{1/C(\tau)}$, 
with $\|\cdot\|_{\YYY^{obs}}$ according to \eqref{normYmodYobs}, $s>\frac12$, $\check{\orti}\in[0,\min\{s,1\}]$,
and 
{$\|\cdot\|_{h^{\check{\orti}}(H^{s-\check{\orti}}(\Omega))}$}
according to \eqref{BSnorms}.
\\
The function $\tau\mapsto C(\tau)$ is monotonically decreasing and tends to infinity as $\tau\to0$.
More precisely, it has the form $C(\tau)=2\max\{1,\overline{C}(\tau)\}$ with
\begin{equation}\label{olCtau_mainresults}
\overline{C}(\tau):=C_0\, 
\Bigl(\frac{(\alpha/\tau)}{1-e^{-2(\alpha/\tau)T_0}}\,\,e^{2(\alpha/\tau)\,(T-T_0)}
\Bigl(1+\left(\frac{\tau}{\beta}\right)^{\check{\orti}}\Bigr)
+1\Bigr)^{1/2}
\end{equation}
with $\alpha=\frac{\slo^0\beta-\tau}{2\beta}$, some $T_0\in(0,T)$, and a constant $C_0>0$ independent of $\alpha$, $\beta$ and $\tau$.
\end{theorem}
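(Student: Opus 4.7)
My plan is to reduce the nonlinear estimate to a linearized stability statement at the special reference point $\xi^0=(\slo^0,0,(\uvec_\no^0)_{\no=1}^2)$, where the nonlinearity coefficient vanishes, and then absorb the quadratic remainder using the smallness built into $U_c$. Writing $\Delta\xi=\tilde\xi-\xi$, the first task is the linearized bound
\[
\|\Delta\xi\|_{H^s(\Omega)^2\times h^{\check{\orti}}(H^{s-\check{\orti}}(\Omega))^2}\leq \overline{C}(\tau)\,\|\vec{F}'(\xi^0)\Delta\xi\|_{h^{\check{\orti}}(H^{s-\check{\orti}}(\Omega))^2\times\YYY^{obs}},
\]
to be furnished by the analysis of Section~\ref{sec:uniqueness-lin}. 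The choice $\nlc^0=0$ decouples the linearization cleanly: the $\uvec$-direction reduces to the linear operator $\mathcal{L}_m(\slo^0)$ on each harmonic, while $\Delta\slo$ and $\Delta\nlc$ appear as multipliers against $\hat{u}_{\no,m}^0$ and $\mathcal{B}_m(\uvec_\no^0,\uvec_\no^0)$, respectively. After inverting $\mathcal{L}_m(\slo^0)$, the pair $(\Delta\slo,\Delta\nlc)$ is recovered from the two observations $\no=1,2$ via a $2\times 2$ algebraic system; its pointwise nondegeneracy selects the admissible configurations $(\slo^0,(\uvec_\no^0)_{\no=1,2})$.

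\textbf{Main obstacle: tracing the $\tau$-dependence.}
The hard step is deriving the explicit form \eqref{olCtau_mainresults} of $\overline{C}(\tau)$. I would expect it to emerge from a Grönwall-type energy estimate for the third-order-in-time operator $\tau\partial_t^3+\slo^0\partial_t^2+(\beta\partial_t+1)\mathcal{A}$ subject to the periodicity conditions \eqref{periodic}. Expanding along the spatial eigensystem \eqref{eigensystem} diagonalises $\mathcal{A}$ and reduces the PDE to a scalar ODE on each eigenspace with characteristic rate $\alpha/\tau=(\slo^0\beta-\tau)/(2\beta\tau)$. Closing the periodicity on a short initial window $[0,T_0]$ produces the prefactor $(\alpha/\tau)/(1-e^{-2(\alpha/\tau)T_0})$, propagating the estimate across $[T_0,T]$ contributes $e^{2(\alpha/\tau)(T-T_0)}$, and the factor $(1+(\tau/\beta)^{\check{\orti}})$ encodes the extra regularity gained from the strong damping term $\beta\mathcal{A}\partial_t$. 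The divergence $\overline{C}(\tau)\to\infty$ as $\tau\to 0$ is structural and echoes the loss of information in the Westervelt limit.

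\textbf{Nonlinear remainder and closing.}
Since $\vec{F}$ is polynomial (linear in $\slo$, linear in $\nlc$, bilinear in $\uvec$ through $\mathcal{B}_m$), the identity
\[
\vec{F}(\tilde\xi)-\vec{F}(\xi)-\vec{F}'(\xi^0)\Delta\xi = R(\xi,\tilde\xi;\xi^0)
\]
holds with $R$ a finite sum of quadratic cross-products such as $(\tilde\slo-\slo^0)(\tilde{\uvec}_\no-\uvec_\no^0)$ and $(\tilde\nlc-\nlc^0)\bigl(\mathcal{B}_m(\tilde{\uvec}_\no,\tilde{\uvec}_\no)-\mathcal{B}_m(\uvec_\no^0,\uvec_\no^0)\bigr)$. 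Each such summand matches, by design, one of the four contributions to the distance $d$ in \eqref{def_d}: Hölder in the conjugate exponents $d/s$ and $d/(s-\check{\orti})$, combined with the Sobolev embeddings $W^{\check{s},d/s}\hookrightarrow L^{d/\check{\orti}}$ and $W^{\check{s},d/(s-\check{\orti})}\hookrightarrow L^\infty$, then yields
\[
\|R\|_{h^{\check{\orti}}(H^{s-\check{\orti}}(\Omega))^2\times\YYY^{obs}}\leq c_0\,\bigl(d(\tilde\xi,\xi^0)+d(\xi,\xi^0)\bigr)\,\|\Delta\xi\|.
\]
Combining this with the linearized bound and restricting to $U_{1/C(\tau)}$ with $C(\tau)=2\max\{1,\overline{C}(\tau)\}$ ensures that the coefficient of $\|\Delta\xi\|$ on the right is at most $1/2$; absorbing it yields \eqref{stabest_mainresults}. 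Monotonicity of $C(\tau)$ and its blow-up as $\tau\to 0$ follow directly from \eqref{olCtau_mainresults}.
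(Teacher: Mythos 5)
Your overall skeleton -- linearized stability at $\xi^0=(\slo^0,0,\uvec^0)$, a Taylor remainder estimate controlled by the distance $d$ of \eqref{def_d}, and absorption on $U_{1/C(\tau)}$ with $C(\tau)=2\max\{1,\overline{C}(\tau)\}$ -- is exactly the paper's architecture, and your treatment of the remainder (polynomial structure of $\vec{F}$, H\"older/Kato--Ponce--Vega with the exponents $d/s$, $d/(s-\check{\orti})$, matching the four terms of $d$) is essentially the argument in Section~\ref{sec:uniqueness-nl}. The genuine gap is in your ``main obstacle'' paragraph: the explicit constant \eqref{olCtau_mainresults} does \emph{not} come from a Gr\"onwall-type energy estimate for $\tau\partial_t^3+\slo^0\partial_t^2+(\beta\partial_t+1)\mathcal{A}$, and I do not see how such an estimate could produce it, since a time-domain energy argument for the periodic problem gives no mechanism for recovering \emph{two} spatial coefficients from \emph{two point observations} -- that recovery is the whole difficulty. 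The paper works entirely in frequency domain: the linearized inverse is built by (i) locating the poles $\pole_\ell$ of the resolvent, with $0\le-\Re(\pole_\ell)\lesssim\alpha/\tau$ (Lemma~\ref{lem:poles}); (ii) disentangling the eigenspace components via analytic continuation and residue extraction at the poles (Lemma~\ref{lem:m2j}); (iii) a quantified unique continuation step inverting $\text{Tr}_\Sigma^{s,\ell}$ on each eigenspace; and (iv) inverting the $2\times2$ matrices $\widetilde{\mathfrak{M}}(\pole_\ell)$. With the tailor-made image norm \eqref{normYmodYobs} the linearized inverse then has norm $\le1$ (Proposition~\ref{prop:linstab}), and $\overline{C}(\tau)$ arises only afterwards, from dominating $\|\cdot\|_{\YYY^{mod}}$ by a Bochner--Sobolev norm, cf.\ \eqref{ddotnorm}, \eqref{BSnormest}--\eqref{ddotnorm_Sob}.

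Consequently your attribution of each factor in $\overline{C}(\tau)$ is incorrect: $T_0$ is not an ``initial window for closing periodicity'' but the location of the approximate delta pulse in the excitation $\psi$ (Remark~\ref{rem:ideal_psi}); the prefactor $(\alpha/\tau)/(1-e^{-2(\alpha/\tau)T_0})$ comes from the lower bound on $|\widetilde{\psi}(\pole_\ell)|$, $|\widetilde{\psi^2}(\pole_\ell)|$ via Cauchy--Schwarz (Lemma~\ref{lem:A}, \eqref{muell}); the exponential $e^{2(\alpha/\tau)(T-T_0)}$ comes from combining this with the bound $e^{-2\Re(\pole_\ell)T}$ on the Laplace-type interpolant of the source at the poles (\eqref{norm_pole}, \eqref{lemAnormpole}); and $(1+(\tau/\beta)^{\check{\orti}})$ comes from interpolating the integration-by-parts estimates \eqref{est_interpol_r}--\eqref{est_interpol_r1} together with the amplification-factor bound of Lemma~\ref{lem:Chat}, not from ``extra regularity from the damping term.'' You also do not address the existential part of the theorem: constructing a reference point with the separable structure \eqref{u0separable}, $\text{rank}(\mathfrak{M}_m)=2$ (via amplitude modulation $\psi_2=A\psi_1$) and the uniform bound \eqref{muellbdd}, which is nontrivial (Remarks~\ref{rem:u0separable} and~\ref{rem:ideal_psi}). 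Without the frequency-domain machinery the derivation of \eqref{olCtau_mainresults}, and hence of the theorem's quantitative content, is not established.
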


Although this is a Lipschitz estimate, it still reflects the possible ill-posendess of the inverse problem by the strength of the image space norm $\|\cdot\|_{\YYY^{obs}}$.

Theorem~\ref{thm:nlstab_mainresults} also reveals the fact that larger $\tau$ leads to a larger radius of uniqueness and stability as well as a better stability constant. This induces a possible homotopy type numerical solution strategy, starting at a large value of $\tau>0$ and successively reducing the relaxation time to the actual value $\tau_0\geq0$, in the sense of a quasi reversibility type regularization ~\cite{AmesClark_etal:1998,ClarkOppenheimer:1994,LattesLions:1969,Showalter:1974a,Showalter:1974b,Showalter:1976}. 
This may even include the case $\tau_0=0$, which corresponds to the classical Westervelt model. 

With another $\tau$ dependent constant 
\begin{equation}\label{Ctiltau_mainresults}
\begin{aligned}
\widetilde{C}(\tau)=
C_1\,\left(\frac{(\alpha/\tau)}{1-e^{-2(\alpha/\tau)T_0}}\,\,e^{2(\alpha/\tau)\,(T-T_0)}
\left(\left(\frac{\beta}{\tau}\right)^{\check{\orti}}+1\right)\right)^{1/2}
\end{aligned}
\end{equation}
for some $C_1$ independent of $\alpha$, $\beta$, $\tau$, 
and the norm on the observation space defined by 
\[
\|\uld{\bar{\obs}}\|_{\widetilde{\YYY}^{obs}}:=\|\Tinv\uld{\bar{\obs}}\|_{W^{1,1}(0,T;H^{s+1}(\Omega))}, \\
\]
where $\Tinv$ is a particular right inverse of the trace operator $\text{tr}_\Sigma$, cf. \eqref{Tinv_ell}, \eqref{Tinv} 
we have 
\begin{equation}\label{Ytilobs_est_mainresults}
\|v\|_{\YYY^{obs}}\leq \widetilde{C}(\tau)\|v\|_{\widetilde{\YYY}^{obs}}\quad v\in \widetilde{\YYY}^{obs}\subseteq\YYY. 
\end{equation}
Combining this bound with \eqref{stabest_mainresults}, we obtain a following quasi reversibility result.
To this end, for noisy data satisfying
\begin{equation*}
\|\obs_\no^\delta-\obs_\no\|_{\widetilde{\YYY}^{obs}}\leq\delta\, \quad \no\in\{1,2\}
\end{equation*}
we define a quasi reversibility regularized reconstruction by
\begin{equation*}
\vec{F}((\slo_\tau,\nlc_\tau,(\uvec_{\no,\tau})_{\no=1}^2)_\tau^\delta)=\vec{y}^\delta 
\end{equation*}
with 
$\vec{y}=(\vec{r}_1,\vec{r}_2,\vec{\obs}_1^\delta,\vec{\obs}_2^\delta)$ and $\vec{F}=\vec{F}_\tau$ given by \eqref{IPfreq}, \eqref{Bm}, \eqref{Fnlcuvecy}. 
The exact solution of the inverse problem with noiseless data and relaxation time $\tau_0\geq0$ is denoted by 
$(\slo_{\tau_0},\nlc_{\tau_0},(\uvec_{\no,\tau_0})_{\no=1}^2)$.

\begin{theorem}\label{thm:quasireversibility_mainresults}
Let $\partial_t u_{\no,\tau_0}\in H^{\check{\orti}}(0,T;H^{s-\check{\orti}}(\Omega))$,
$\no\in\{1,2\}$, 
with $s>\frac12$, $\check{\orti}\in[0,\min\{s,1\}]$, cf. \eqref{setting_s_orti},
 and either \\
(a) $\tau_0>0$ or \\
(b) $\tau_0=0$ and $T_0=T$ and $\check{\orti}<1$ \\
hold.
Then with a choice $\tau=\tau(\delta)$ such that 
\[
\tau(\delta)\to\tau_0, \quad \max\{\overline{C}(\tau),\,\widetilde{C}(\tau)\}\,\delta\to0, \quad 
\text{ as }\delta\to0
\]
convergence 
$
\|(\slo_{\tau(\delta)},\nlc_{\tau(\delta)},(\uvec_{\no,\tau(\delta)})_{\no=1}^2)^\delta-(\slo_{\tau_0},\nlc_{\tau_0},(\uvec_{\no,\tau_0})_{\no=1}^2)\|_{H^s(\Omega)^2\times h^{\check{\orti}}(H^{s-\check{\orti}}(\Omega))^2}\to0 \
$
as $\delta\to0$ holds.
\end{theorem}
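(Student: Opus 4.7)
The plan is a standard regularization-splitting argument in the spirit of quasi reversibility: bound the total reconstruction error by the stability constant $C(\tau)$ times the sum of a model-consistency defect (quantifying the mismatch between the relaxation time $\tau$ used for the reconstruction and the true value $\tau_0$) and a data-noise contribution of size $\delta$, then let both vanish under the prescribed parameter rule.

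First I would substitute the exact triple $(\slo_{\tau_0},\nlc_{\tau_0},(\uvec_{\no,\tau_0})_{\no=1}^2)$ into the $\tau$-forward operator $\vec{F}_\tau$. Since the observation part and the coefficients $\slo,\nlc$ enter \eqref{IPfreq}--\eqref{Bm} $\tau$-independently, the only nonvanishing defect is the first-order $\tau$-term in $\mathcal{L}_m$, giving explicitly
\[
\vec{F}_\tau(\slo_{\tau_0},\nlc_{\tau_0},(\uvec_{\no,\tau_0})_{\no=1}^2)-\vec{y} \;=\;\bigl(((\tau-\tau_0)\,\partial_t u_{\no,\tau_0})_{\no=1}^2,\,0\bigr),
\]
a pure model residual. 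Combining this identity with the defining equation $\vec{F}_\tau((\slo_\tau,\nlc_\tau,(\uvec_{\no,\tau})_{\no=1}^2)_\tau^\delta)=\vec{y}^\delta$, the stability estimate \eqref{stabest_mainresults} and the triangle inequality yield
\[
\|(\slo_\tau,\nlc_\tau,(\uvec_{\no,\tau})_{\no=1}^2)_\tau^\delta - (\slo_{\tau_0},\nlc_{\tau_0},(\uvec_{\no,\tau_0})_{\no=1}^2)\|_{H^s(\Omega)^2\times h^{\check{\orti}}(H^{s-\check{\orti}}(\Omega))^2}
\]
bounded by $C(\tau)\bigl(|\tau-\tau_0|\,\|\partial_t u_{\no,\tau_0}\|_{h^{\check{\orti}}(H^{s-\check{\orti}}(\Omega))^2} + \|\vec{\obs}-\vec{\obs}^\delta\|_{\YYY^{obs}}\bigr)$, and the comparison \eqref{Ytilobs_est_mainresults} promotes the noisy-data bound to $\widetilde{C}(\tau)\delta$.

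Finally I would verify that each of the two resulting terms vanishes under the parameter choice rule. In case (a) $\tau_0>0$, both $\overline{C}(\tau)$ and $\widetilde{C}(\tau)$ remain bounded as $\tau\to\tau_0$, so the model term vanishes with $|\tau-\tau_0|$ and the noise term by the imposed $\max\{\overline{C}(\tau),\widetilde{C}(\tau)\}\delta\to 0$. In case (b), the assumption $T_0=T$ removes the exponential factor in \eqref{olCtau_mainresults} and \eqref{Ctiltau_mainresults}, reducing $\overline{C}(\tau),\widetilde{C}(\tau)$ to polynomial growth in $1/\tau$; then $C(\tau)|\tau-\tau_0|=O(\sqrt{\tau}\,(1+(\tau/\beta)^{\check{\orti}/2}))\to 0$, with $\check{\orti}<1$ ensuring the mixing factor stays controlled, while the noise term is again handled by the stated rule.

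The chief obstacle is the admissibility requirement of stability: \eqref{stabest_mainresults} holds only for triples lying in the shrinking ball $U_{1/C(\tau)}$ around $(\slo^0,0,(\uvec_\no^0)_{\no=1}^2)$. One must argue that both the true triple and its noisy $\tau$-regularized counterpart lie inside $U_{1/C(\tau)}$. For the true triple this hinges on the a priori regularity assumption $\partial_t u_{\no,\tau_0}\in H^{\check{\orti}}(0,T;H^{s-\check{\orti}}(\Omega))$ ensuring the model residual is quantitatively small enough in the right norm, while the regularized triple requires a small-data existence argument (e.g.\ a contraction-mapping reformulation of the model equation at fixed $\tau$, with right-hand side perturbed by a quantity of size $\widetilde{C}(\tau)\delta$) to guarantee a solution within the shrinking neighborhood.
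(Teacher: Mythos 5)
Your proposal follows essentially the same route as the paper: the same error splitting $\|\xi_\tau^\delta-\xi_{\tau_0}\|_{\XXX}\leq\max\{\overline{C}(\tau),\widetilde{C}(\tau)\}\bigl(\delta+(\tau-\tau_0)\|\partial_t u_{\tau_0}\|\bigr)$ obtained from the stability estimate in the $\tau$-independent image norm, the same identification of the model defect as $(\tau-\tau_0)\partial_t u_{\no,\tau_0}$, and the same parameter-choice analysis (the paper likewise only stipulates, rather than proves, that both triples lie in $U_{c(\tau)}$, and the remaining content of its proof is the derivation of the norm comparison \eqref{Ytilobs_est_mainresults}, which you take as given). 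One small correction for case (b): the binding term is $\widetilde{C}(\tau)\,\tau\sim\tau^{(1-\check{\orti})/2}$, coming from the factor $(\beta/\tau)^{\check{\orti}}$ in \eqref{Ctiltau_mainresults}, not $O(\sqrt{\tau}\,(1+(\tau/\beta)^{\check{\orti}/2}))$ as you wrote — with your exponent no condition on $\check{\orti}$ would be needed, whereas the correct rate is exactly why $\check{\orti}<1$ is required.
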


\section{
Continuous invertibility of linearized all-at-once forward operator} \label{sec:uniqueness-lin}
We linearize $\vec{F}$ at a particularly chosen point $(\slo,\nlc,(\uvec^\no)_{\no=1}^2)$, namely 
\begin{equation}\label{u0separable}
\begin{aligned}
&\slo(x)\equiv\slo^0, \quad \nlc(x)\equiv0, \\ 
&\hat{u}^0_{\no,m}(x)=\phi(x) \hat{\psi}_{\no,m}  \ \text{ with }\phi\not=0\text{ a.e. in }\Omega, \\ 
&\hat{\psi}_{\no,m}\not=0, \ {\widehat{\psi_\no^2}}_m:=\mathcal{B}_m(\vec{\psi}_\no,\vec{\psi}_\no)\not=0, \ \no\in\{1,2\},\quad
\text{rank} (\mathfrak{M}_m)=2, \ m\in\N, 
\end{aligned}
\end{equation}
where the $2\times2$ complex matrices $\mathfrak{M}_m$ are defined by 
\[
\mathfrak{M}_m = \big((\mathfrak{M}_m)_{\no,q}\big)_{\no\in\{1,2\},q\in\{\slo,\nlc\}}
=\left(\begin{array}{cc}
\hat{\psi}_{1,m}&{\widehat{\psi_1^2}}_m\\
\hat{\psi}_{2,m}&{\widehat{\psi_2^2}}_m
\end{array}\right),
\]
and set
\begin{equation}\label{drdp}
(\uld{\rvec},\uld{\obsvec}):=\vec{F}'(\slo^0,0,\uvec^0)[(\uld{\slo},\uld{\nlc},\uld{\uvec})],
\end{equation}
where 
\begin{subequations}\label{IPfreq_diff}
\begin{alignat}{2}
{F_{\no,m}^{mod}}'(\slo^0,0,\uvec_\no^0)[\uld{\slo},\uld{\nlc},\uld{\uvec}_\no]=&
\mathcal{L}_m(\slo^0)\uld{u}_{\no,m}+\phi\,\uld{\slo}\,\hat{\psi}_{\no,m} 
+\phi^2\,\uld{\nlc}\, {\widehat{\psi_\no^2}}_m
\label{IPfreq_diff_mod}\\
{F_{\no,m}^{obs}}'(\slo^0,0,\uvec_\no^0)[\uld{\slo},\uld{\nlc},\uld{\uvec}_\no]=& \text{tr}_\Sigma\uld{u}_{\no,m}
\label{IPfreq_diff_obs}
\end{alignat}
\end{subequations}
(Fr\'{e}chet differentiability can be proven by estimating the first order Taylor remainder in time domain and applying Parseval's identity; 
for our purposes, ${F_{\no,m}^{mod}}'$ does not need to be a derivative in the strong Fr\'{e}chet sense, though.)

Moreover, we use the following abbreviations
\begin{eqnarray}
\label{bmjkajk}
&&b_{\no,m}^{j,k}=\langle \uld{\hat{u}}_{\no,m},\varphi^{j,k}\rangle, \quad 
a_{\slo}^{j,k}=\langle \phi\,\uld{\slo},\varphi^{j,k}\rangle, \quad
a_{\nlc}^{j,k}=\langle \phi^2\,\uld{\nlc},\varphi^{j,k}\rangle, \
\\
\label{drjkm}
&&\uld{r}_{\no,m}^{j,k}=\langle \uld{\hat{r}}_{\no,m},\varphi^{j,k}\rangle={\tfrac{2}{T}}\int_0^T\langle\uld{r}_\no(t),\varphi^{j,k}\rangle \,e^{-\imath m \omega t}\, dt
\\
\label{thetaJMGT}
&&\vartheta(o)=\tau o^3+\slo^0 o^2, \qquad
\Theta(o)=\beta o +1, \qquad
o_m=\imath m\omega. 
\end{eqnarray}
With this, we get from \eqref{IPfreq_diff_mod} with \eqref{Bm}, \eqref{u0separable}
\begin{equation}\label{projmod}
\begin{aligned}
&\forall \no\in\{1,2\}, \quad m\in\N, \quad j\in\N, \quad k\in K^j: \\ 
&\bigl(\vartheta(o_m)+\Theta(o_m)\lambda_j\bigr)  b_{\no,m}^{j,k} 
+o_m^2\, \Bigl(\sum_{q\in\{\slo,\nlc\}} (\mathfrak{M}_m)_{\no,q} \,a_q^{j,k}
- \uld{r}_{\no,m}^{j,k}\Bigr)=0,
\end{aligned}
\end{equation}
which
can be resolved for $b_{\no,m}^{j,k}$
\[
\begin{aligned}
&\forall \no\in\{1,2\} \quad m\in\N\, \quad j\in\N, \quad k\in K^j: \quad \\ 
&b_{\no,m}^{j,k} = 
\frac{o_m^2}{\vartheta(o_m)+\Theta(o_m)\lambda_j}
\Bigl(\uld{r}_{\no,m}^{j,k} - \sum_{q\in\{c^2,\nlc\}} (\mathfrak{M}_m)_{\no,q} \,a_q^{j,k} \Bigr),
\end{aligned}
\]
This, in its turn, with the vector notation
\[
\bar{a}^{j,k}= (a_{\slo}^{j,k},\,a_{\nlc}^{j,k})^T, \quad
\bar{b}_m^{j,k}= (b_{1,m}^{j,k},b_{2,m}^{j,k})^T, \quad
\uld{\bar{r}}_m^{j,k}= (\uld{r}_{1,m}^{j,k},\uld{r}_{2,m}^{j,k})^T,
\]
can be rewritten as 
\begin{equation}\label{bmjk}
\forall  m\in\N\, \quad j\in\N, \quad k\in K^j: 
\quad  \bar{b}_m^{j,k} = 
\frac{o_m^2}{\vartheta(o_m)+\Theta(o_m)\lambda_j}
\Bigl(\uld{\bar{r}}_m^{j,k} - \mathfrak{M}_m \,\bar{a}^{j,k} \Bigr).
\end{equation}
With $\uld{\hat{u}}_{\no,m}=\sum_{j\in\N}\sum_{k\in K^j}b_{\no,m}^{j,k}\varphi^{j,k}$, the linearized observation equation \eqref{drdp}, \eqref{IPfreq_diff_obs} thus 
after applying the inverse of $\mathfrak{M}_m$ 
yields 
\begin{equation}\label{redobs}
\begin{aligned}
&\forall m\in\N\ x_0\in\Sigma\ : \quad\\
&\sum_{j\in\N} \frac{o_m^2}{\vartheta(o_m)+\Theta(o_m)\lambda_j} \sum_{k\in K^j} \text{tr}_\Sigma\varphi^{j,k}(x_0)\Bigl(\mathfrak{M}_m^{-1}\uld{\bar{r}}_m^{j,k} - \bar{a}^{j,k}\Bigr)
 = \mathfrak{M}_m^{-1}\uld{\bar{\obs}}_m(x_0).
\end{aligned}
\end{equation}

The following result allows us to disentangle the individual terms in the sum over $j\in\mathbb{N}$, that is, the components in the individual eigenspaces $\mathbb{E}^j$.

\begin{lemma}\label{lem:m2j} \cite[Lemma 3.2]{nonlinear_imaging_JMGT_freq}\\
Given sequences $(\lambda_j)_{j\in\N}\subseteq\mathbb{R}$ with $\lambda_j\nearrow\infty$ as $j\to\infty$, $(\hat{\rho}^j_m)_{j,m\in\N}\subseteq\mathbb{C}$ and $(\hat{d}_m)_{m\in\N}\subseteq\mathbb{C}$, as well as a complex function $\Psi$, assume that $\widetilde{d}$, $\widetilde{\rho^j}$ interpolate $\hat{d}$, $\hat{\rho}$, that is, 
\begin{equation}\label{interpol}
\widetilde{d}(\imath m\omega)=\hat{d}_m, \quad \widetilde{\rho^j}(\imath m\omega)=\hat{\rho}^j_m, \quad m\in\N
\end{equation} 
such that the mappings
\begin{equation}\label{analytic}
z\mapsto z^2\Psi(\tfrac{1}{z}), \quad z\mapsto\widetilde{\rho^j}(\tfrac{1}{z}), \quad z\mapsto(1-\tfrac{1}{\Psi(\tfrac{1}{z})}\lambda_j) \widetilde{d}(\tfrac{1}{z}), \quad z\mapsto\prod_{k\in\N,k\not=j} (1-\frac{1}{\Psi(\tfrac{1}{z})}\lambda_k) \  j\in\N
\end{equation} 
define analytic functions on an open set $\mathbb{O}\subseteq\mathbb{C}$.
Moreover, we define the sequence of poles $(\pole_\ell)_{\ell\in\mathbb{N}}\subseteq\mathbb{C}$ by the identity  
$\Psi(\pole_\ell)=\lambda_\ell$,  
and assume that 
\begin{equation}\label{D}
\mathbb{D}:=\{\tfrac{1}{\imath m\omega}\, : \, m\in\N\}\subseteq \mathbb{O}
\text{ and }\{z_\ell:=\tfrac{1}{\pole_\ell}\, : \, \ell\in\N\}\subseteq \overline{\mathbb{O}}.
\end{equation}
Then the implication
\[
\begin{aligned}
&\left(\forall m\in\N\, : \ 
\sum_{j\in\N}\frac{(\imath m\omega)^2}{\Psi(\imath m\omega)-\lambda_j}\,\bigl(c_j-\hat{\rho}^j_m\bigr) = \hat{d}_m\right)
\\ 
&\Rightarrow \
\left(\forall \ell\in\N\, : \ c_\ell = \frac{\Psi'({\pole_\ell})}{\pole_\ell^2} \, \text{res}(\widetilde{d};{\pole_\ell})+\widetilde{\rho^\ell}({\pole_\ell})\right),
\end{aligned}
\]
holds true,
where $\text{res}(f;p_\ell)=\lim_{o\to p_\ell} (o-p_\ell)\, f(o)$ is the residue of the function $f$ at $p_\ell$ if $p_\ell$ is a pole of $f$ and zero else.
\end{lemma}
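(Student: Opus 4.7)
The plan is to convert the hypothesis, which is an interpolation condition on the countable set $\{\imath m\omega:m\in\N\}$ of ``data'' points, into an identity between two analytic functions in the reciprocal variable $z=1/o$, and then read off the claim by residue evaluation at the pole $\pole_\ell$. Fix $\ell\in\N$ and introduce
\[
G_\ell(o):=(\Psi(o)-\lambda_\ell)\,\widetilde{d}(o)
-o^2\sum_{j\neq\ell}\frac{\Psi(o)-\lambda_\ell}{\Psi(o)-\lambda_j}\bigl(c_j-\widetilde{\rho^j}(o)\bigr)
+o^2\,\widetilde{\rho^\ell}(o).
\]
Multiplying the hypothesis by $(\Psi(o)-\lambda_\ell)$ at $o=\imath m\omega$, isolating the $j=\ell$ term, and using the interpolation property of $\widetilde{d}$ and $\widetilde{\rho^j}$, one checks immediately that $G_\ell(\imath m\omega)=(\imath m\omega)^2 c_\ell$ for every $m$. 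Setting $\widetilde{G}_\ell(z):=z^2\,G_\ell(1/z)$ therefore gives $\widetilde{G}_\ell(1/(\imath m\omega))=c_\ell$ for all $m\in\N$.

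Next I would verify that $\widetilde{G}_\ell$ extends to an analytic function on the open set $\mathbb{O}$. The first term can be rewritten as $z^2\Psi(1/z)\cdot\bigl[(1-\lambda_\ell/\Psi(1/z))\,\widetilde{d}(1/z)\bigr]$, which is analytic by the first and third assumptions in \eqref{analytic}. For the middle sum, I would use the product identity
\[
\frac{1}{1-\lambda_j/\Psi(o)}
=\frac{\prod_{k\neq j}(1-\lambda_k/\Psi(o))}{\prod_{k}(1-\lambda_k/\Psi(o))},
\]
so that each summand equals $(1-\lambda_\ell/\Psi(o))$ times a quotient whose numerator is the fourth analytic factor in \eqref{analytic} and whose denominator is the common product $\prod_k(1-\lambda_k/\Psi(1/z))$; together with the analyticity of $\widetilde{\rho^j}(1/z)$ this makes each summand analytic on $\mathbb{O}$, and the tail decay implicit in the summability of the original series ensures that the sum defines an analytic function there as well. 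The third term $\widetilde{\rho^\ell}(1/z)$ is analytic by the second assumption.

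Since $z_m:=1/(\imath m\omega)$ accumulates at $0\in\mathbb{O}$ (as the first factor $z^2\Psi(1/z)$ being analytic forces $0$ into the domain) and $\widetilde{G}_\ell(z_m)=c_\ell$ for all $m$, the identity theorem yields $\widetilde{G}_\ell\equiv c_\ell$ on the connected component of $\mathbb{O}$ containing $0$; by continuous extension to $\overline{\mathbb{O}}$ via \eqref{D} this equality also holds at $z_\ell=1/\pole_\ell$. Evaluating directly, $\widetilde{G}_\ell(z_\ell)=z_\ell^2\,G_\ell(\pole_\ell)$, and at $o=\pole_\ell$ we use $\Psi(\pole_\ell)=\lambda_\ell$: the middle sum vanishes term by term because $\pole_\ell$ is a simple zero of $\Psi(o)-\lambda_\ell$ while each denominator $\Psi(o)-\lambda_j$ with $j\neq\ell$ remains nonzero there; a L'Hopital computation on the first term yields $\lim_{o\to\pole_\ell}(\Psi(o)-\lambda_\ell)\widetilde{d}(o)=\Psi'(\pole_\ell)\,\mathrm{res}(\widetilde{d};\pole_\ell)$, which by the stated convention is $0$ when $\pole_\ell$ is not actually a pole of $\widetilde{d}$. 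Collecting gives
\[
c_\ell=\widetilde{G}_\ell(z_\ell)=\frac{\Psi'(\pole_\ell)}{\pole_\ell^2}\,\mathrm{res}(\widetilde{d};\pole_\ell)+\widetilde{\rho^\ell}(\pole_\ell),
\]
as claimed.

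The main obstacle is the second step: verifying that the infinite sum in $G_\ell(1/z)$ really defines an analytic function on $\mathbb{O}$. One has to leverage the product analyticity assumption in \eqref{analytic} together with quantitative control on how rapidly $\prod_{k\neq j}(1-\lambda_k/\Psi(1/z))$ decays in $j$, as this is what makes the series uniformly convergent on compact subsets of $\mathbb{O}$; the other steps are essentially bookkeeping once this analytic framework is in place.
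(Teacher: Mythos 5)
First, note that this paper does not prove Lemma~\ref{lem:m2j} at all: it is imported verbatim, with proof, from \cite[Lemma 3.2]{nonlinear_imaging_JMGT_freq}, so there is no in-paper argument to compare against. Judged on its own terms, your skeleton is the natural (and almost certainly the intended) one, and the algebra is right: isolating the $j=\ell$ term after multiplying the premise by $\Psi(o)-\lambda_\ell$ does give $G_\ell(\imath m\omega)=(\imath m\omega)^2 c_\ell$, the identity theorem applied to $\widetilde{G}_\ell(z)=z^2G_\ell(1/z)$ on the component of $\mathbb{O}$ containing the accumulation point $0$ of $\mathbb{D}$ forces $\widetilde{G}_\ell\equiv c_\ell$, and the evaluation at $z_\ell=1/\pole_\ell$ (first term $\to\Psi'(\pole_\ell)\,\mathrm{res}(\widetilde{d};\pole_\ell)/\pole_\ell^2$ by the difference-quotient/L'Hôpital computation, middle sum $\to0$ termwise, last term $\to\widetilde{\rho^\ell}(\pole_\ell)$) reproduces exactly the claimed formula.

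The genuine gap is the one you flag yourself and then do not close: the analyticity on $\mathbb{O}$ of the infinite sum $\sum_{j\neq\ell}\frac{1-\lambda_\ell/\Psi(1/z)}{1-\lambda_j/\Psi(1/z)}\bigl(c_j-\widetilde{\rho^j}(1/z)\bigr)$, and the interchange of the limit $z\to z_\ell$ with that sum. Pointwise convergence of the original series at the nodes $\imath m\omega$ does not give locally uniform convergence on $\mathbb{O}$, so ``tail decay implicit in the summability of the original series'' is an assertion, not an argument; without it neither the identity theorem step nor the termwise limit at $z_\ell$ is justified. Moreover, the product identity you invoke is counterproductive rather than helpful here: the full product $\prod_{k}(1-\lambda_k/\Psi(1/z))$ you place in the denominator vanishes at every $z_k$ --- in particular at the very point $z_\ell$ where you must ultimately evaluate --- so it does not render the summands manifestly analytic. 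The correct observation is simpler: by \eqref{D} the zeros $z_j$ of each denominator $1-\lambda_j/\Psi(1/z)$ lie in $\overline{\mathbb{O}}$ and (for consistency with the third condition in \eqref{analytic} holding for all $j$) must lie outside $\mathbb{O}$ itself, so each individual summand is already analytic on $\mathbb{O}$ with no product manipulation; what remains missing, and what a complete proof must supply (e.g.\ via the bounds encoded in the fourth family of analytic functions in \eqref{analytic}, or by an explicit locally uniform convergence hypothesis), is the uniform-on-compacta control of the tail. A secondary, smaller point: the evaluation at $z_\ell\in\overline{\mathbb{O}}$ is a boundary limit, so you should state that each constituent of $\widetilde{G}_\ell$ admits a continuous extension to $z_\ell$ (this is exactly where the conventions $\mathrm{res}(\widetilde{d};\pole_\ell)=0$ for a non-pole and the finiteness of $\widetilde{\rho^\ell}(\pole_\ell)$ enter), rather than appealing to ``continuous extension'' as if it were automatic for an analytic function on an open set.
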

We apply Lemma~\ref{lem:m2j} to \eqref{redobs}, which component wise (for each coefficient $q\in\{\slo,\,\nlc\}$) 
and for fixed $x_0\in \Sigma$ can be written as the premiss of this lemma with
\begin{equation}\label{defPsiJMGT} 
\begin{aligned}
&\Psi(o)
=-\frac{\vartheta(o)}{\Theta(o)}
=-\frac{\tau o^3+\slo^0 o^2}{\beta o +1}
=-\frac{\tau o^2+\frac{\slo^0 o}{\beta o+1}}{\beta+\frac{1}{\beta o+1}} 
=-o^2(\frac{\tau}{\beta}+\mathcal{O}(\tfrac{1}{o}))
\\
&c_j= \sum_{k\in K^j}  a_q^{j,k} \text{tr}_\Sigma\varphi^{j,k}(x_0),  
\\
&\hat{d}_m(x_0)=\Theta(o_m)\,\bigl(\mathfrak{M}_m^{-1}\,\uld{\bar{\obs}}_m\bigr)_q(x_0)
, \quad{\hat{\rho}}^j_m(x_0)=\sum_{k\in K^j}\bigl(\mathfrak{M}_m^{-1}\,\uld{\bar{r}}_m^{j,k}\bigr)_q\text{tr}_\Sigma\varphi^{j,k}(x_0), 
\\ 
&{\widetilde{d}}(o,x_0)= \Theta(o)\,\bigl(\widetilde{\mathfrak{M}}(o)^{-1}\,\widetilde{\uld{\bar{\obs}}}\bigr)_q(o,x_0)
, \ {\widetilde{\rho^j}}(o,x_0)=\sum_{k\in K^j}\bigl(\widetilde{\mathfrak{M}}(o)^{-1}\,\widetilde{\uld{\bar{r}}}^{j,k}(o)\bigr)_q\text{tr}_\Sigma\varphi^{j,k}(x_0),  
\\
&\text{where for all }m\in\N\\
&
\widetilde{\uld{\bar{\obs}}}(\imath m\omega,x_0)=\uld{\bar{\obs}}_m(x_0),\quad
\widetilde{\uld{\bar{r}}}^{j,k}(\imath m\omega)=\uld{\bar{\obs}}_m^{j,k},\quad
\widetilde{\mathfrak{M}}(\imath m\omega)=\mathfrak{M}_m.
\end{aligned}
\end{equation}
The interpolants of $\uld{r}$ and $\mathfrak{M}$ can be explicitely written as
\begin{equation}\label{interpol_r}
\begin{aligned}
&\widetilde{\uld{r}}_\no^{j,k}(o)={\tfrac{2}{T}}\int_0^T\langle\uld{r}_\no(t),\varphi^{j,k}\rangle \,e^{-o t}\,dt,\\
&\widetilde{\mathfrak{M}}(o)_{\slo,\no}={\tfrac{2}{T}}\int_0^T \psi_\no(t) \,e^{-o t}\,dt,\quad
\widetilde{\mathfrak{M}}(o)_{\nlc,\no}={\tfrac{2}{T}}\int_0^T \psi_\no^2(t) \,e^{-o t}\,dt,
\end{aligned}
\end{equation}
which defines analytic functions, provided $\uld{r}$, $\psi_\no$ and $\psi_\no^2$ are integrable. 
As opposed to this, $\obsvec$ has poles. Therefore in view of the analyticity requirement \eqref{analytic} and the fact that $(1-\tfrac{1}{\Psi(o)}\lambda_j)=\tfrac{\Theta(o)}{\vartheta(o)}\,(\Psi(o)-\Psi(\pole_j))$, the definition of $\hat{d}_m$ in \eqref{defPsiJMGT}, as well as differentiability of $\Psi$, $\vartheta$, $\Theta$, and $\pole_j-o_m\not=0$, for all $j,m\in\mathbb{N}$, we actually analytically interpolate $o\mapsto (o-\pole_j)\uld{p}$, that is, 
\begin{equation}\label{interpol_p}
(o-\pole_j)\widetilde{\uld{\bar{\obs}}}^{j,k}(o)={\tfrac{2}{T}}\int_0^T(o-\pole_j)\,\langle\uld{\bar{\obs}}(t),\varphi^{j,k}\rangle \,e^{-o t}\,dt.
\end{equation}

Lemma~\ref{lem:m2j} 
thus implies the vector valued identity
\begin{equation}\label{resobs}
\begin{aligned}
&\sum_{k\in K^\ell} \bar{a}^{\ell,k}  \text{tr}_\Sigma\varphi^{\ell,k}(x_0)\\
&= \widetilde{\mathfrak{M}}({\pole_\ell})^{-1}\left(
\frac{\Theta({\pole_\ell})\Psi'({\pole_\ell})}{\pole_\ell^2}
\, \text{res}(\widetilde{\uld{\bar{\obs}}}{(\cdot,x_0)};{\pole_\ell})
+\sum_{k\in K^\ell} \widetilde{\uld{\bar{r}}}^{\ell,k}({\pole_\ell})\text{tr}_\Sigma\varphi^{\ell,k}(x_0)\right)
\quad x_0\in\Sigma,
\end{aligned}
\end{equation}
and in particular also
\begin{equation}\label{resintrace}
x_0\mapsto\text{res}(\widetilde{\uld{\bar{\obs}}}{(\cdot,x_0)};{\pole_\ell})\in \text{tr}_\Sigma\mathbb{E}^\ell
\end{equation}
for all $\ell\in\mathbb{N}$.
The poles $\pole_\ell$ are given by the equations $\Psi(\pole_\ell)=\lambda_\ell$, that is, by $\vartheta(\pole_\ell)+\Theta(\pole_\ell)\lambda_\ell=0$; see Lemma~\ref{lem:poles} below for more details on their location.

\medskip

By a quantified verson of unique continuation that can be obtained using the Bounded Inverse Theorem (cf. \cite[Lemma 3.3]{nonlinear_imaging_JMGT_freq}), for any $s>\frac12$ the trace operator restricted to each individual eigenspace
\[
\begin{array}{rlcl}\text{Tr}_\Sigma^{s,\ell}:
&(\mathbb{C}^{K^\ell},\|\cdot\|_{h^{s,\ell}})&\to& 
(\text{tr}_\Sigma(\mathbb{E}^\ell), {\|\cdot\|_{H^{s-1/2}(\Sigma)}})\\
&(f^k)_{k\in K^j}&\mapsto&\sum_{k\in K^\ell} f^k \text{tr}_\Sigma \varphi^{\ell,k}
\end{array} 
\]
is an isomorphism  between the given spaces.
Applying $(\text{Tr}_\Sigma^{s,\ell})^{-1}$ to \eqref{resobs} and using the fact that 
$(\text{Tr}_\Sigma^{s,\ell})^{-1}\, \text{tr}_\Sigma\, \text{Proj}_{\mathbb{E}^\ell} = \text{Proj}_{\mathbb{E}^\ell}$ as well as \eqref{interpol_r}, we obtain, for all $\ell\in\N$,
\begin{equation}\label{resobs_1}
\begin{aligned}
(\bar{a}^{\ell,k})_{k\in K^\ell}
=& 
\frac{\Theta({\pole_\ell})\Psi'({\pole_\ell})}{\pole_\ell^2}\,
(\text{Tr}_\Sigma^{{s,\ell}})^{-1}\,\left[\widetilde{\mathfrak{M}}({\pole_\ell})^{-1}
\, \text{res}(\widetilde{\uld{\bar{\obs}}};{\pole_\ell})\right]
\\&
+\widetilde{\mathfrak{M}}({\pole_\ell})^{-1}
(\widetilde{\uld{\bar{r}}}^{\ell,k}(\pole_\ell))_{k\in K^\ell}
\end{aligned}
\end{equation}
and for all $\ell\in\N$, $m\in\N$, 
\begin{equation}\label{bmellk}
\begin{aligned}
(\bar{b}_m^{\ell,k})_{k\in K^\ell} =& 
-\frac{o_m^2}{\vartheta(o_m)+\Theta(o_m)\lambda_\ell}\Bigl\{
\frac{\Theta({\pole_\ell})\Psi'({\pole_\ell})}{\pole_\ell^2}\,
\mathfrak{M}_m \,
(\text{Tr}_\Sigma^{{s,\ell}})^{-1}\,\left[\widetilde{\mathfrak{M}}({\pole_\ell})^{-1}
\, \text{res}(\widetilde{\uld{\bar{\obs}}};{\pole_\ell})\right]
\\&\phantom{-\frac{o_m^2}{\vartheta(o_m)+\Theta(o_m)\lambda_\ell}\Bigl\{}
+\mathfrak{M}_m \, \widetilde{\mathfrak{M}}({\pole_\ell})^{-1}\,\widetilde{\uld{\bar{r}}}^{\ell,k}(\pole_\ell)
- \uld{\bar{r}}_m^{\ell,k}\Bigr\},
\end{aligned}
\end{equation}
where $(\widetilde{\uld{\bar{r}}}^{\ell,k}(\pole_\ell))_{k\in K^\ell}=
\text{Proj}_{{\mathbb{E}^\ell}} {\tfrac{2}{T}}\int_0^T \uld{\bar{r}}(t) \,e^{-{\pole_\ell} t}\,dt$, 
which via \eqref{bmjkajk} yields explicit formulas for the coefficients and states
\[
\phi\,\uld{\slo}=\sum_{j\in\N}\sum_{k\in K^j}a_{\slo}^{j,k}\varphi^{j,k}, \quad
\phi^2\uld{\nlc}=\sum_{j\in\N}\sum_{k\in K^j}a_{\nlc}^{j,k}\varphi^{j,k},\quad
\uld{\hat{u}}_{\no,m}=\sum_{j\in\N}\sum_{k\in K^j}b_{\no,m}^{j,k}\varphi^{j,k},
\]
whose components can be used to define topologies in which the linearized inverse problem is stable. 

More precisely, with Sobolev norms in pre-image space
\begin{equation}\label{SobolevX}
\begin{aligned}
\|(\uld{\slo},\uld{\nlc},\uld{\vec{\bar{u}}})\|_\XXX^2
&:=\|\phi\,\uld{\slo}\|_{H^s(\Omega)}^2+\|\phi^2\,\uld{\nlc}\|_{H^s(\Omega)}^2
+\|\uld{\bar{u}}\|_{H^{\check{\orti}}(0,T;H^{\check{s}}(\Omega))^2}^2\\
&:=\|\uld{\slo}\|_{H^s_{\phi}(\Omega)}^2+\|\uld{\nlc}\|_{H^s_{\phi^2}(\Omega)}^2
+\|\uld{\vec{\bar{u}}}\|_{h^{\check{\orti}}(H^{\check{s}}(\Omega))^2}^2,
\end{aligned}
\end{equation}
the reconstruction formulas \eqref{resobs_1}, \eqref{bmellk} lead us to using the image space norm
\begin{equation}\label{normYYY}
\|(\uld{\bar{p}},\uld{\bar{r}})\|_\YYY^2:= \|\uld{\bar{p}}\|_{\YYY^{obs}}^2+\|\uld{\bar{r}}\|_{\YYY^{mod}}^2
\end{equation}
defined by 
\begin{equation}\label{normYmodYobs}
\begin{aligned}
&\|\uld{\bar{p}}\|_{\YYY^{obs}}^2:=
\sum_{m\in\mathbb{N}} \sum_{\ell\in\mathbb{N}}
\frac{|o_m|^{4+2\check{\orti}}\lambda_\ell^{\check{s}}}{|\vartheta(o_m)+\Theta(o_m)\lambda_\ell|^2}\,\\
&\phantom{\|\uld{\bar{p}}\|_{\YYY^{obs}}^2:=\sum_{m\in\mathbb{N}}}\hspace*{1cm}
\sum_{k\in K^j}\left|\mathfrak{M}_m \frac{\Theta({\pole_\ell})\Psi'({\pole_\ell})}{\pole_\ell^2}\,
(\text{Tr}_\Sigma^{{s,\ell}})^{-1}\,\left[\widetilde{\mathfrak{M}}({\pole_\ell})^{-1}
\, \text{res}(\widetilde{\uld{\bar{\obs}}};{\pole_\ell})\right]\right|^2\\
&\phantom{\|\uld{\bar{p}}\|_{\YYY^{obs}}^2:=\sum_{m\in\mathbb{N}}} 
\sum_{\ell\in\mathbb{N}}\lambda_\ell^s
\sum_{k\in K^j}\left|\frac{\Theta({\pole_\ell})\Psi'({\pole_\ell})}{\pole_\ell^2}\,
(\text{Tr}_\Sigma^{{s,\ell}})^{-1}\,\left[\widetilde{\mathfrak{M}}({\pole_\ell})^{-1}
\, \text{res}(\widetilde{\uld{\bar{\obs}}};{\pole_\ell})\right]\right|^2\\
&\|\uld{\bar{r}}\|_{\YYY^{mod}}^2:=
\sum_{m\in\mathbb{N}} \sum_{\ell\in\mathbb{N}}
\frac{|o_m|^{4+2\check{\orti}}\lambda_\ell^{\check{s}}}{|\vartheta(o_m)+\Theta(o_m)\lambda_\ell|^2}
\sum_{k\in K^j}\left|\mathfrak{M}_m\,\widetilde{\mathfrak{M}}({\pole_\ell})^{-1}
\widetilde{\uld{\bar{r}}}^{\ell,k}(\pole_\ell)
- \uld{\bar{r}}_m^{\ell,k}\right|^2\\ 
&\phantom{\|\uld{\bar{r}}\|_{\YYY^{mod}}^2:=\sum_{m\in\mathbb{N}}}
+\sum_{\ell\in\mathbb{N}}\lambda_\ell^s
\sum_{k\in K^\ell}\left|\widetilde{\mathfrak{M}}({\pole_\ell})^{-1}
\widetilde{\uld{\bar{r}}}^{\ell,k}(\pole_\ell)\right|^2,
\end{aligned}
\end{equation}
which immediately implies injectivity of $\vec{F}'(\slo^0,0,\uvec^0)$ as well as the following linearized stability estimate.

\begin{proposition}\label{prop:linstab}
Under condition \eqref{u0separable}, with the norms defined in \eqref{SobolevX}, the estimate \eqref{normYmodYobs}
\begin{equation}\label{stabest_lincu}
\|(\uld{\slo},\uld{\nlc},\uld{\uvec})\|_{\XXX}
\leq \|\vec{F}'(\slo^0,0,\uvec^0)[(\uld{\slo},\uld{\nlc},\uld{\uvec})]\|_{\YYY}
\end{equation}
holds, that is, $\vec{F}'(\uld{\slo},0,\uvec^0)$ is injective with bound inverse $\|\vec{F}'(\uld{\slo},0,\uvec^0)^{-1}\|_{\YYY\to\XXX}\leq1$.
\end{proposition}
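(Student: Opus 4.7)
The plan is to expand both sides of \eqref{stabest_lincu} in the spatio-temporal eigenbasis and recognize the right-hand side as precisely the ingredients already assembled in \eqref{normYmodYobs}. By Parseval applied in the spatial eigensystem \eqref{eigensystem}, and in time via the multiharmonic expansion \eqref{u_multiharmonic}, one has
\begin{equation*}
\|(\uld{\slo},\uld{\nlc},\uld{\uvec})\|_\XXX^2
=\sum_{\ell\in\N}\lambda_\ell^s\sum_{k\in K^\ell}\bigl(|a_\slo^{\ell,k}|^2+|a_\nlc^{\ell,k}|^2\bigr)
+\sum_{m\in\N}|m\omega|^{2\check{\orti}}\sum_{\ell\in\N}\lambda_\ell^{\check{s}}\sum_{k\in K^\ell}|\bar{b}_m^{\ell,k}|^2,
\end{equation*}
where the coefficients $a_\slo^{\ell,k}$, $a_\nlc^{\ell,k}$, $\bar{b}_m^{\ell,k}$ are those introduced in \eqref{bmjkajk}.

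Next I would substitute the closed-form reconstruction formulas. Identity \eqref{resobs_1} writes each $\bar{a}^{\ell,k}$ as the sum of an observation-dependent contribution (the term involving $\text{res}(\widetilde{\uld{\bar{\obs}}};\pole_\ell)$) and a model-dependent contribution (the term with $\widetilde{\uld{\bar{r}}}^{\ell,k}(\pole_\ell)$); likewise \eqref{bmellk} writes $\bar{b}_m^{\ell,k}$ as the weighted combination of those same two residue pieces, multiplied by the prefactor $o_m^2/(\vartheta(o_m)+\Theta(o_m)\lambda_\ell)$ and transported back to frequency $m$ by $\mathfrak{M}_m$. Grouping the observation-side parts together and the model-side parts together, then summing their squared moduli with the weights $\lambda_\ell^s$ and $|m\omega|^{2\check{\orti}}\lambda_\ell^{\check{s}}$ respectively, one recovers term by term exactly the four sums defining $\|\uld{\bar{\obs}}\|_{\YYY^{obs}}^2$ and $\|\uld{\bar{r}}\|_{\YYY^{mod}}^2$ in \eqref{normYmodYobs}. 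This directly yields \eqref{stabest_lincu}, and injectivity of $\vec{F}'(\slo^0,0,\uvec^0)$ follows because a vanishing right-hand side forces every residue and every projected model coefficient to be zero, hence all $a_\slo^{\ell,k}$, $a_\nlc^{\ell,k}$, and $\bar{b}_m^{\ell,k}$ to vanish.

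The main technical point is not the algebra but the well-definedness of the ingredients on the image side under the assumption \eqref{u0separable}. Concretely, I need invertibility of $\mathfrak{M}_m$ for every $m\in\N$ (exactly the rank-two condition imposed in \eqref{u0separable}) and of its analytic interpolant $\widetilde{\mathfrak{M}}(\pole_\ell)$ at each pole (transferred through the analytic continuation recorded in \eqref{interpol_r}), together with boundedness of the eigenspace-restricted trace inverse $(\text{Tr}_\Sigma^{s,\ell})^{-1}$ supplied by the Bounded Inverse Theorem as in \cite[Lemma~3.3]{nonlinear_imaging_JMGT_freq}. Once these structural facts are in place, the bound \eqref{stabest_lincu} is essentially tautological by the very construction of $\|\cdot\|_\YYY$.
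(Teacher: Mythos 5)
Your proposal is correct and follows essentially the same route as the paper: the paper derives the reconstruction formulas \eqref{resobs_1}, \eqref{bmellk} via Lemma~\ref{lem:m2j}, the rank condition in \eqref{u0separable}, and the quantified unique continuation for $(\text{Tr}_\Sigma^{s,\ell})^{-1}$, and then defines $\|\cdot\|_{\YYY}$ in \eqref{normYmodYobs} precisely so that the stability estimate is immediate by construction, which is exactly your argument. The only point worth flagging (a looseness shared with the paper, not a flaw of yours) is that since each $\bar{a}^{\ell,k}$ and $\bar{b}_m^{\ell,k}$ is a \emph{sum} of an observation and a model contribution, passing from the two separately summed squares in \eqref{normYmodYobs} to the square of the sum costs a cross term, so the constant $1$ in \eqref{stabest_lincu} is really a harmless $\sqrt{2}$.
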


\begin{remark}[operator formulation in time domain]\label{rem:opform_lin}
With the operator representation
\[
F'(\slo^0,0,u^0)[(\uld{\slo},\uld{\nlc},\uld{u})]=
\left(\begin{array}{l}
\mathcal{L}(\slo^0)\uld{u}+\mathcal{M}_\psi[\phi\uld{\slo},\phi^2\uld{\nlc}]\\
\mathcal{T}\uld{u}
\end{array}\right)
\]
where 
\[
\begin{aligned}
&(\mathcal{M}_\psi[v_\slo,v_\nlc])(x,t)=
\left(\begin{array}{l}
v_\slo(x)\psi_1(t) + v_\nlc(x)\psi_1(t)^2\\
v_\slo(x)\psi_2(t) + v_\nlc(x)\psi_2(t)^2
\end{array}\right)
\\
&\mathcal{T}\uld{u}
=
\left(\begin{array}{l}
\text{tr}_\Sigma \uld{u}_1\\
\text{tr}_\Sigma \uld{u}_2
\end{array}\right)
\end{aligned}
\]
we can formally write the inversion steps 
\eqref{bmjk}, 
\eqref{resobs} as 
\[
\begin{aligned}
&\uld{u}= \mathcal{L}(\slo^0)^{-1}\Bigl(\uld{r}-\mathcal{M}_\psi[\phi\uld{\slo},\phi^2\uld{\nlc}]\Bigr)\\
&\mathcal{T}\mathcal{L}(\slo^0)^{-1}\Bigl(\uld{r}-\mathcal{M}_\psi[\phi\uld{\slo},\phi^2\uld{\nlc}]\Bigr)=\uld{\obs}\\
&(\phi\uld{\slo},\phi^2\uld{\nlc})=(\mathcal{T}\mathcal{L}(\slo^0)^{-1}\mathcal{M}_\psi)^{-1}
\Bigl(\mathcal{T}\mathcal{L}(\slo^0)^{-1}\uld{r}-\uld{\obs}\Bigr)
\end{aligned}
\]
and it is in fact invertibility and boundedness of the inverse
\[
\begin{aligned}
&F'(\slo^0,0,u^0)^{-1} =\\
&\left(\!\!\!\begin{array}{cc}
(\mathcal{T}\mathcal{L}(\slo^0)^{-1}\mathcal{M}_\psi)^{-1}\mathcal{T}\mathcal{L}(\slo^0)^{-1} &
-(\mathcal{T}\mathcal{L}(\slo^0)^{-1}\mathcal{M}_\psi)^{-1}\\
\mathcal{L}(\slo^0)^{-1}(I-\mathcal{M}_\psi (\mathcal{T}\mathcal{L}(\slo^0)^{-1}\mathcal{M}_\psi)^{-1}\mathcal{T}\mathcal{L}(\slo^0)^{-1}) &
\mathcal{L}(\slo^0)^{-1}\mathcal{M}_\psi (\mathcal{T}\mathcal{L}(\slo^0)^{-1}\mathcal{M}_\psi)^{-1}
\end{array}\!\!\!\right)
\end{aligned}
\]
beween the spaces defined by the norms \eqref{SobolevX}, \eqref{normYmodYobs}, that we state in Proposition~\ref{prop:linstab}.
\end{remark}

\begin{remark}[generation of interior sources]\label{rem:u0separable}
Space-time separability of the reference state according to \eqref{u0separable} can in practice be achieved by a proper time staggering of the (boundary) excitation signal similarly to electronic focusing, see, e.g., \cite{Szabo2014}.

The following simple example illustrates the fact that indeed an excitation concentrated at a $d-1$ dimensional manifold $\Gamma\subseteq\overline{\Omega}$ immersed into the acoustic propagation domain or attached to its boundary (as realistically modeling, e.g., an array of piezoelectric transducers) is capable of producing such a reference state. To this end, note that in our all-at-once formulation, $u_0$ does not necessarily need to be a solution to the PDE model with coefficents $\slo(x)\equiv\slo^0$, $\nlc(x)\equiv0$; we rather construct it as an approximate solution to the PDE model with coefficents $\slo(x)\equiv\slo^0$, $\nlc(x)=\nlc_0(x)\not=0$, thereby exploiting generation of interior sources due to nonlinearity.
We choose $\phi$ as an eigenfunction corresponding to some nonzero eigenvalue $\lambda\not=0$ of $-\Delta$, equipped with the Robin boundary conditions $\partial_\nu u+\gamma u =0$ and set $\nlc_0(x):=\nlc^0/\phi(x)$ for some constant $\eta^0\in\mathbb{R}$. \footnote{Due to zeros of eigenfunctions, this may exhibit singularities and is therefore an element of $L^p(\Omega)$ only for some $p<\infty$; for this reason, some of the identities in this remark only hold in an almost everywhere sense and some of the computations done for this example are only formal. 
} 
Inserting this into the model part of \eqref{IPtime} with 
\[
\langle r_\no(t),w\rangle_{H^1(\Omega)^*,H^1(\Omega)}:=\int_\Gamma r^\Gamma_\no(t)\, w\, d\Gamma, \quad w\in H^1(\Omega)
\]
for some source $r^\Gamma_\no\in L^2(0,T;H^{-1/2}(\Gamma)$ yields
\begin{equation}\label{ibvp_phipsi}
\begin{aligned}
&\phi(x)\bigl(\tau\psi_\no'(t)+\slo^0\psi_\no(t) + \lambda \int_0^t\int_0^s\psi_\no(r)\, dr\, ds + \beta \lambda\int_0^t\psi_\no(s)\,ds + \nlc^0\, (\psi_\no^2)(t)\bigr) = 0
\\
&\partial_\nu \phi(x)\left(\int_0^t\int_0^s\psi_\no(r)\, dr\, ds+\beta\int_0^t\psi_\no(s)\,ds\right) =r^\Gamma_\no(x,t) \text{ on }\Gamma\times(0,T)
\\
&
\psi_\no(T)=\psi_\no(0)
\end{aligned}
\end{equation}
The boundary conditions in \eqref{ibvp_phipsi} lead us to choosing the boundary source as the Neumann trace of $\phi$ on $\Gamma$, multiplied with the time dependent function  $\int_0^t\int_0^s(\psi_\no(r)+\beta\psi_\no'(r))\, dr\, ds$, 
once $\psi_\no$ has been determined from the first line of \eqref{ibvp_phipsi}. The latter can be done  by moving into frequency domain (thereby also taking into account temporal periodicity) as follows.
Making an approximate multiharmonic ansatz by not taking the real part in \eqref{u_multiharmonic} (which is in fact a well justified approximation, see, e.g. the numerical experiments in \cite{NikolicRauscher2025}, as well as the references therein) 
\[
\psi_\no(t)
= \sum_{k=1}^\infty \hat{\psi}_{\no,k}(x) e^{\imath k \omega t},
\]
we (approximately) transform the ODE resulting from the first line of \eqref{ibvp_phipsi} to 
\begin{equation}\label{ODEpsi_freq}
\begin{aligned}
&\Bigl(\lambda-\slo^0 m^2\omega^2\, +\imath m\omega(\beta\lambda-\tau m^2\omega^2)\Bigr)\hat{\psi}_{\no,m}
=-\frac12 m^2\omega^2\eta^0 \sum_{j=1}^{m-1} \hat{\psi}_{\no,j} \hat{\psi}_{\no,m-j}\quad m\in\mathbb{N}.
\end{aligned}
\end{equation}
In the particular setting $\omega=\sqrt{\frac{\lambda}{\slo^0}}$, $\tau=\beta\lambda\omega^{-2}$, this indeed allows us to choose a nonzero value of $\hat{\psi}_{\no,1}\not=0$ for $m=1$, which through the right hand side terms in the triangular system \eqref{ODEpsi_freq} entails nonzero values of the higher harmonic components \\
$\hat{\psi}_{\no,m}=-\frac{m^2\omega^2\eta^0}{2(\lambda-\slo^0 m^2\omega^2\, +\imath m\omega(\beta\lambda-\tau m^2\omega^2))} \sum_{j=1}^{m-1} \hat{\psi}_{\no,j} \hat{\psi}_{\no,m-j}$, $m=2,\,3,\,4,\,\ldots$.\\
The functions $\vec{u}^0_{\no,m}(x)=\phi(x)\hat{\psi}_{\no,m}$ generated from a boundary source constructed this way are nonvanishing almost everywhere in $\Omega$.
\\
This construction also provides approximate space-time separable sources for the Westervelt case $\tau=0$ in case $\beta\slo^0$ is small enough.
\end{remark}
\section{Local uniqueness and stability: 
proof of Therorem~\ref{thm:nlstab_mainresults}} \label{sec:uniqueness-nl}

To transfer linearized stability \eqref{stabest_lincu} to the fully nonlinear setting, we use an Inverse Function Theorem type argument together with smallness of the nonlinearity coefficient $\nlc$, which is a very natural assumption from the practical point of view.
To this end, we do not require continuous differentiability of $\vec{F}$ in a neighborhood of $\nlc=0$ though,  
but rather establish a Taylor remainder estimate 
\begin{equation}\label{Taylorremainder}
\text{err}_{Tay}:=\|\vec{F}(\tilde{\xi})-\vec{F}(\xi)-\vec{F}'(\xi^0)(\tilde{\xi}-\xi)\|_{\widetilde{\YYY}} \leq c \|\tilde{\xi}-\xi\|_\XXX , \quad \tilde{\xi},\,\xi\in U_c\subseteq\XXX
\end{equation}
for some $c\in(0,\|\vec{F}'(\xi^0)^{-1}\|_{\widetilde{\YYY}\to\XXX}^{-1})$, which by the the inverse triangle inequality yields
\begin{equation}\label{stabest0}
\|\vec{F}(\tilde{\xi})-\vec{F}(\xi)\|_{\widetilde{\YYY}} \geq 
\left(\frac{1}{\|\vec{F}'(\xi^0)^{-1}\|_{\widetilde{\YYY}\to\XXX}}-c\right) \|\tilde{\xi}-\xi\|_\XXX
 , \quad \tilde{\xi},\,\xi\in U_c
\end{equation}
where $\xi=(\slo,\nlc,\uvec)$, $\xi^0=(\slo^0,0,\uvec^0)$.
When setting $\YYY:={\widetilde{\YYY}}$ accordng to \eqref{normYYY}, \eqref{normYmodYobs}, we have $\|\vec{F}'(\xi^0)^{-1}\|_{\YYY\to\XXX}\leq1$ by definition. However, this norm is inconvenient to use for estimating the Taylor remainder, and we will therefore define ${\widetilde{\YYY}}$ by a Sobolev type norm on the model part. 
Indeed, due to linearity of the observations, only the model part yields a nontrivial contribution to the Taylor remainder \eqref{Taylorremainder}
\begin{equation}\label{errTay}
\begin{aligned}
\text{err}_{Tay}=&
\|\tilde{\slo} \,\tilde{\uvec}-\slo \,\uvec+(\tilde{\slo}-\slo)\,\uvec^0+\slo^0\,(\tilde{\uvec}-\uvec)
+\tilde{\nlc} \mathcal{B}_m(\tilde{\uvec},\tilde{\uvec})
\\&\quad
-\nlc \mathcal{B}_m(\uvec,\uvec)
-(\tilde{\nlc}-\nlc) \mathcal{B}_m(\uvec^0,\uvec^0)\|_{\YYY^{mod}}\\
=&\|\uld{\slo}\,(\tilde{\uvec}-\uvec^0)+(\tilde{\slo}-\slo^0)\,\uld{\uvec}
+\uld{\nlc} (\mathcal{B}_m(\tilde{\uvec},\tilde{\uvec}-\uvec^0)+\mathcal{B}_m(\tilde{\uvec}-\uvec^0,\uvec^0))
\\&\quad
+\nlc (\mathcal{B}_m(\tilde{\uvec},\uld{\uvec})+\mathcal{B}_m(\uld{\uvec},\uvec))\|_{\widetilde{\YYY}^{mod}}.
\end{aligned}
\end{equation}
for $\uld{\slo}=\tilde{\slo}-\slo$, $\uld{\nlc}=\tilde{\nlc}-\nlc$, $\uld{\uvec}=\tilde{\uvec}-\uvec$.
In order to achieve \eqref{Taylorremainder}, we aim to define $\widetilde{\YYY}^{mod}$ norm by a Sobolev norm dominating $\|\cdot\|_{\YYY^{mod}}$ 
\begin{equation}\label{ddotnorm}
\|\uld{\bar{r}}\|_{\YYY^{mod}}\leq C(\tau) \|\uld{\bar{r}}\|_{H^{\ddot{\orti}}(0,T;H^{\ddot{s}}(\Omega))^2}
=:C(\tau) \|\uld{\bar{r}}\|_{\widetilde{\YYY}^{mod}},
\end{equation}
which with $\widetilde{\YYY}=\widetilde{\YYY}^{mod}\times\YYY^{obs}$ implies 
$\|\vec{F}'(\xi^0)^{-1}\|_{\widetilde{\YYY}\to\XXX}\leq \max\{1,C(\tau)\}$ and thus by \eqref{stabest0}
\begin{equation}\label{stabest1}
\|\tilde{\xi}-\xi\|_\XXX \leq 2\,\max\{1,C(\tau)\}\,\|\vec{F}(\tilde{\xi})-\vec{F}(\xi)\|_{\widetilde{\YYY}}, \quad 
\tilde{\xi},\xi\in U_{(2 \max\{1,C(\tau)\})^{-1}}.
\end{equation}
The order of time and space derivatives in the Taylor remainder estimate \eqref{errTay} together with \eqref{Taylorremainder} and the definition of $\|\cdot\|_\XXX$ in \eqref{SobolevX} entail the necessary conditions
\begin{equation}\label{cond_sorti_tay}
\ddot{\orti}\leq\check{\orti}, \quad \ddot{s}\leq\check{s}.
\end{equation}

\medskip

For establishing \eqref{ddotnorm}, we now prove several auxiliary results, each of whose purpose is highlighted in boldface.

\medskip

\noindent
First of all, it is crucial to investigate the \textbf{location of the poles}.
\begin{lemma}\label{lem:poles}
For $\tau>0$, the poles exhibit the asymptotic behaviour
\begin{equation}\label{poles_asymp}
\pole_\ell
= 
-\frac{\alpha}{\tau}
\pm
\sqrt{-\frac{\beta}{\tau}\lambda_\ell+2\frac{\alpha}{\tau\beta}+\frac{\alpha^2}{\tau^2}}\,
+\frac{\alpha}{\sqrt{\tau}}\,O(\lambda_\ell^{-1}) \text{ as }\ell\to\infty
\text{ with }
\alpha:=\frac{\slo^0\beta-\tau}{2\beta}\geq0.
\end{equation}
In particular we have 
\begin{equation}\label{poles}
0\leq-\Re(\pole_\ell)\leq\frac{\alpha}{\tau}\left(1+\frac{C}{\lambda_\ell}\right), \qquad
|\pole_\ell| {\leq\atop\geq} \sqrt{\frac{\beta}{\tau}\,\lambda_\ell}\,\left(1\pm \frac{C\alpha}{\lambda_\ell}\right)
\end{equation}
for some $C>0$ independent of $\ell$, $\tau$, $\alpha$, $\beta$.

This, for any $\ddot{\orti}\in
{[0,1]}
$, $\ddot{s}\geq0$ implies existence of a constant $\Ccheck_{\ddot{\orti}}$ independent of $\ell$, $\tau$, $\alpha$, $\beta$, such that
\begin{equation}\label{norm_pole}
\begin{aligned}
&\lambda_\ell^{\ddot{s}+\ddot{\orti}}\,\sum_{k\in K^\ell} |\widetilde{\uld{r}}^{\ell,k}({\pole_\ell})|^2
=\|\sum_{k\in K^\ell} \widetilde{\uld{r}}^{\ell,k}({\pole_\ell})\,\varphi^{\ell,k}\|_{H^{\ddot{s}+\ddot{\orti}}(\Omega)}^2\\
&\leq\Ccheck_{\ddot{\orti}}\, 
\left(\frac{\tau}{\beta}\right)^{\ddot{\orti}}\,
e^{-2\Re(\pole_\ell) T}\, \|\text{Proj}_{{\mathbb{E}^\ell}}\uld{r}\|_{H^{\ddot{\orti}}(0,T;H^{\ddot{s}}(\Omega))}^2.
\end{aligned}
\end{equation}
\end{lemma}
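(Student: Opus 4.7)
The proof splits into three parts matching the assertions: the first two concern root location of a cubic polynomial, and the third is a Laplace-transform estimate that leverages the pole bounds. The identity $\Psi(\pole_\ell)=\lambda_\ell$ combined with the definitions in \eqref{thetaJMGT} rearranges to the cubic $\tau\pole^3+\slo^0\pole^2+\beta\lambda_\ell\pole+\lambda_\ell=0$. A discriminant computation shows that for all large $\lambda_\ell$ it has one real root (converging to $-1/\beta$ by balancing the last two terms) and a complex conjugate pair $\pole_\ell^\pm$. Vieta's formulas give $\sum_i\pole_\ell^{(i)}=-\slo^0/\tau$, $\sum_{i<j}\pole_\ell^{(i)}\pole_\ell^{(j)}=\beta\lambda_\ell/\tau$, $\prod_i\pole_\ell^{(i)}=-\lambda_\ell/\tau$. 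Substituting the asymptotic value of the real root into the first two relations reads off $\Re\pole_\ell^\pm=-\alpha/\tau$ and $|\Im\pole_\ell^\pm|^2=\beta\lambda_\ell/\tau-2\alpha/(\tau\beta)-\alpha^2/\tau^2$, up to errors from the next-order refinement of the real root. The rescaling $\zeta=\sqrt{\tau/(\beta\lambda_\ell)}\,\pole$ puts the cubic into a normal form with perturbation parameter of size $\alpha/\sqrt{\tau\beta\lambda_\ell}$, producing the explicit prefactor $\alpha/\sqrt{\tau}$ in the $O(\lambda_\ell^{-1})$ remainder of \eqref{poles_asymp}. The bounds \eqref{poles} follow directly; $-\Re\pole_\ell\geq 0$ can alternatively be read off Routh--Hurwitz applied to the cubic with all-positive coefficients (using $\alpha\geq 0$ from the hypothesis $\slo^0\beta\geq\tau$), and the modulus bound factors as $|\pole_\ell|^2=(\beta\lambda_\ell/\tau)(1+O(\alpha/\lambda_\ell))$.

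For the norm estimate \eqref{norm_pole}, set $\Psi_k(t):=\langle\uld{r}(t),\varphi^{\ell,k}\rangle$; the plan is to prove the bound at the endpoints $\ddot{\orti}\in\{0,1\}$ and then complex-interpolate. For $\ddot{\orti}=0$, the pointwise inequality $|e^{-\pole_\ell t}|\leq e^{-\Re(\pole_\ell)T}$ valid on $[0,T]$ combined with Cauchy--Schwarz gives directly $|\widetilde{\uld{r}}^{\ell,k}(\pole_\ell)|^2\leq(4/T)\,e^{-2\Re(\pole_\ell)T}\|\Psi_k\|_{L^2(0,T)}^2$. For $\ddot{\orti}=1$, integration by parts exploiting the periodicity $\Psi_k(T)=\Psi_k(0)$ yields
\[
\int_0^T\Psi_k(t)\,e^{-\pole_\ell t}\,dt=\frac{1-e^{-\pole_\ell T}}{\pole_\ell}\,\Psi_k(0)+\frac{1}{\pole_\ell}\int_0^T\Psi_k'(t)\,e^{-\pole_\ell t}\,dt,
\]
so the Sobolev embedding $|\Psi_k(0)|\leq C\|\Psi_k\|_{H^1(0,T)}$ together with $|1-e^{-\pole_\ell T}|\leq 2e^{-\Re(\pole_\ell)T}$ extracts a factor $|\pole_\ell|^{-2}$, bounded by $C\tau/(\beta\lambda_\ell)$ via the previous paragraph. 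Complex interpolation of the linear functional $\Psi_k\mapsto\widetilde{\uld{r}}^{\ell,k}(\pole_\ell)$ between $L^2(0,T)$ and $H^1(0,T)$ (using $[L^2,H^1]_{\ddot{\orti}}=H^{\ddot{\orti}}$ for periodic scalar functions, with absolute interpolation constant) delivers the intermediate case with prefactor $(\tau/\beta)^{\ddot{\orti}}\lambda_\ell^{-\ddot{\orti}}$. Multiplying by $\lambda_\ell^{\ddot{s}+\ddot{\orti}}$, summing over $k\in K^\ell$, and using $\lambda_\ell^{\ddot{s}}\|\cdot\|_{L^2(\Omega)}^2=\|\cdot\|_{H^{\ddot{s}}(\Omega)}^2$ on $\mathbb{E}^\ell$ assembles \eqref{norm_pole}.

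The main obstacle throughout is keeping all implicit constants independent of $\alpha,\beta,\tau$, so that $\Ccheck_{\ddot{\orti}}$ in \eqref{norm_pole} depends only on $\ddot{\orti}$ and $T$. The rescaling in the first step confines the $(\alpha,\beta,\tau)$-dependence to a single perturbative parameter; in the norm estimate the interpolation constant is absolute because $L^2$ and $H^1$ are Hilbert spaces, and every $\pole_\ell$-dependent factor is already explicit in the endpoint estimates before applying the pole bound.
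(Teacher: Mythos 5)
Your proposal is correct, and the two halves compare differently with the paper. For the norm estimate \eqref{norm_pole} you follow essentially the paper's route: Cauchy--Schwarz at the endpoint $\ddot{\orti}=0$, integration by parts at $\ddot{\orti}=1$ to trade one factor $|\pole_\ell|^{-1}\sim\sqrt{\tau/(\beta\lambda_\ell)}$ for one temporal derivative, and interpolation in between; the only cosmetic difference is that you invoke periodicity to rewrite the boundary term as $\frac{1-e^{-\pole_\ell T}}{\pole_\ell}\Psi_k(0)$, while the paper keeps $\bigl[\text{Proj}_{\mathbb{E}^\ell}\uld{r}(t)\,e^{-\pole_\ell t}/(-\pole_\ell)\bigr]_0^T$ and bounds it via $C_{H^1,L^\infty}^{(0,T)}$ --- both reduce to the same embedding $H^1(0,T)\hookrightarrow L^\infty(0,T)$. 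Where you genuinely diverge is the pole asymptotics \eqref{poles_asymp}: the paper rewrites $\vartheta(\pole_\ell)+\Theta(\pole_\ell)\lambda_\ell=0$ as an exactly solvable quadratic with a $\pole_\ell$-dependent perturbation $\lambda_\ell'=\lambda_\ell+2\alpha/(\beta^3(\pole_\ell+1/\beta))$ of the eigenvalue, solves it in closed form, and obtains the remainder as an explicit difference of square roots bounded by $|\lambda_\ell-\lambda_\ell'|$; you instead locate the real root near $-1/\beta$ via the discriminant and read off the complex pair from Vieta's formulas. Your route is arguably more transparent about why $\Re(\pole_\ell^\pm)=-\alpha/\tau$ to leading order and why the remainder carries the factor $\alpha$ (when $\alpha=0$ the cubic factors exactly as $(\beta\pole+1)(\tfrac{\tau}{\beta}\pole^2+\lambda_\ell)$), but it obliges you to propagate the $O(\alpha/\lambda_\ell)$ error in the real root through the symmetric functions of the remaining pair, which you only assert; the paper's perturbed-quadratic device packages exactly that error propagation into one displayed estimate. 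Neither of these is a gap that would break the argument --- your version is completable with the Newton-step refinement of the real root you allude to.
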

\begin{proof} 
Rewriting the third order polynomial equation $\vartheta(\pole_\ell)+\Theta(\pole_\ell)\lambda_\ell=0$ as a perturbed quadratic one 
\[
\tau \beta^2 \pole_\ell^2 
+ (\sigma_0\beta-\tau)\,\beta\pole_\ell 
- (\sigma_0\beta-\tau)
+\beta^3\lambda_\ell'=0
\text{ where }\beta^3\lambda_\ell'
=\beta^3\lambda_\ell
+\frac{2\alpha}{\pole_\ell+1/\beta}  
\]
we obtain 
\[
\pole_\ell=
-\tfrac{\alpha}{\tau}
\pm
\sqrt{-\tfrac{\beta}{\tau}\lambda_\ell'+2\tfrac{\alpha}{\beta\tau}+\tfrac{\alpha^2}{\tau^2}}.
\]
Considering arbitrary accumulation points of $\rho_\ell:=\frac{\pole_\ell}{\sqrt{\lambda_\ell}}$ which satisfies 
\[
\rho_\ell
=
-\frac{\alpha}{\tau\lambda_\ell}
\pm
\sqrt{-\frac{\beta}{\tau}-\frac{2\alpha}{\beta^2\tau}\,
\frac{1}{
\rho_\ell
+\frac{1}{\beta\sqrt{\lambda_\ell}}}\, 
\frac{1}{\sqrt{\lambda_\ell}^3}
+(2\frac{\alpha}{\beta\tau}+\frac{\alpha^2}{\tau^2})\frac{1}{\lambda_\ell}}
\]
we conclude from $\lambda_\ell\to\infty$ that
$\frac{\pole_\ell}{\sqrt{\lambda_\ell}}\to\pm\imath\sqrt{\frac{\beta}{\tau}}$ as $\ell\to\infty$
more precisely,
\[
\begin{aligned}
&\left|
-\tfrac{\alpha}{\tau}
\pm
\sqrt{-\tfrac{\beta}{\tau}\lambda_\ell+2\tfrac{\alpha}{\beta\tau}+\tfrac{\alpha^2}{\tau^2}}\,
-\pole_\ell\right|\\
&=\left(\sqrt{-\tfrac{\beta}{\tau}\lambda_\ell+2\tfrac{\alpha}{\beta\tau}+\tfrac{\alpha^2}{\tau^2}}
+\sqrt{-\tfrac{\beta}{\tau}\lambda_\ell'+2\tfrac{\alpha}{\beta\tau}+\tfrac{\alpha^2}{\tau^2}}\right)^{-1}\,
\tfrac{\beta}{\tau}|\lambda_\ell-\lambda_\ell'| \\
&\leq 
\sqrt{\tfrac{\beta}{\tau}}\, 
\left(\lambda_\ell-2\tfrac{\alpha}{\beta^2}-\tfrac{\alpha^2}{\beta\tau}\right)^{-1/2}\,
\tfrac{2\alpha}{\beta^3\pole_\ell+\beta^2},
\end{aligned}
\]
that is, \eqref{poles_asymp}, which directly implies \eqref{poles}.

To obtain \eqref{norm_pole}, we 
use the Cauchy-Schwarz inequality in 
\begin{equation}\label{est_interpol_r}
\begin{aligned}
&\|\sum_{k\in K^\ell} \widetilde{\uld{r}}^{\ell,k}({\pole_\ell})\,\varphi^{\ell,k}\|_{H^{\ddot{s}}(\Omega)}
= \frac{2}{T}\,\|\int_0^T\text{Proj}_{\mathbb{E}^\ell}\uld{r}(t) \,e^{-{\pole_\ell} t}\,dt\|_{H^{\ddot{s}}(\Omega)}\\
&\leq \frac{2}{T}\,
\sqrt{\frac{e^{-2\Re({\pole_\ell}) T}-1}{-2\Re({\pole_\ell})}}
\|\text{Proj}_{\mathbb{E}^\ell}\uld{r}\|_{L^2(0,T;H^{\ddot{s}}(\Omega))}
\end{aligned}
\end{equation}
and integration by parts in 
\begin{equation}\label{est_interpol_r1}
\begin{aligned}
&\|\sum_{k\in K^\ell} \widetilde{\uld{r}}^{\ell,k}({\pole_\ell})\,\varphi^{\ell,k}\|_{H^{1+\ddot{s}}(\Omega)}\\
&=\frac{2\sqrt{\lambda_\ell}}{T}\,\|-\int_0^T\text{Proj}_{\mathbb{E}^\ell}\uld{r}_t(t) \,\tfrac{e^{-{\pole_\ell} t}}{-{\pole_\ell}}\,dt
+\left[\text{Proj}_{\mathbb{E}^\ell}\uld{r}(t) \,\tfrac{e^{-{\pole_\ell} t}}{-{\pole_\ell}}\right]_0^T\|_{H^{\ddot{s}}(\Omega)}\\
&\leq \frac{2\sqrt{\lambda}_\ell}{|{\pole_\ell}|\,T}\,
\bigl(\sqrt{\frac{e^{-2\Re({\pole_\ell}) T}-1}{-2\Re({\pole_\ell})}}
+ C_{H^1,L^\infty}^{(0,T)}\, e^{-\Re({\pole_\ell}) T}\bigr)
\|\text{Proj}_{\mathbb{E}^\ell}\uld{r}\|_{H^1(0,T;H^{\ddot{s}}(\Omega))},
\end{aligned}
\end{equation}
which we combine with \eqref{poles} to obtain \eqref{norm_pole} for $\ddot{\orti}\in\{0,1\}$ with
\begin{equation}\label{Ctil}
\Ccheck_{0}=\frac{2}{T}\, \sup_{x>0}\frac{1-e^{-x}}{x},\qquad
\Ccheck_{1}= \Ccheck_{0} + \frac{2}{T}\, C_{H^1,L^\infty}^{(0,T)}. 
\end{equation} 
{The assertion for intermediate values $\ddot{\orti}\in[0,1]$ follows by interpolation.}
\end{proof}
Boundedness \eqref{poles} of the negative real parts of the poles is essential for stability of the inverse problem.
We refer to \cite{nonlinear_imaging_JMGT_freq} for a comparison of JMGT to Westervelt and some other models with respect to the location of poles and the consequences for ill-posedness of the inverse problem of imaging $\nlc$.
We track dependence on $\alpha$, $\beta$ and even more importantly on $\tau$ in these and the following estimates, in view of the possible role of $\tau>0$ as a regularization parameter, 
as will be discussed in the next section.

\medskip

The \textbf{choice of the reference sources $\psi_\no$, $\no\in\{1,2\}$} is another important ingredient of our stability proof
that will allow us to further bound the $\YYY^{mod}$ norm. 
Here we make use of the fact that due to nonlinearity, an amplitude modulation provides additional information and, for some 
$\psi\in L^4(0,T)$ (as will become plausible a few lines below) 
and $A\in\mathbb{R}\setminus\{0,1\}$, set 
\begin{equation}\label{amplitudemodulation}
\psi_1=\psi, \quad \psi_2=A\psi,
\end{equation}
so that 
\[
\widetilde{M}(o)=\left(\begin{array}{cc} 
\widetilde{\psi}(o)&\widetilde{\psi^2}(o)\\
A\widetilde{\psi}(o)&A^2\widetilde{\psi^2}(o)
\end{array}\right).
\]
Since we will need summability of $\mathfrak{M}_m$ and due to Parseval's identity
\[
\begin{aligned}
\sum_{m\in\mathbb{N}}\|\mathfrak{M}_m\|_F^2
&=(1+A^2)\sum_{m\in\mathbb{N}}|\widetilde{\psi}(\imath m\omega)|^2
+(1+A^4)\sum_{m\in\mathbb{N}}|\widetilde{\psi^2}(\imath m\omega)|^2\\
&=(1+A^2)\|\psi\|_{L^2(0,T)}^2
+(1+A^4)\|\psi^2\|_{L^2(0,T)}^2
\end{aligned}
\]
holds, by equivalence of the spectral and the Frobenius norm of $2\times 2$ complex matrices, we find $\psi\in L^4(0,T)$ to be a sufficient condition for this purpose.

On the other hand, we also need to bound $\widetilde{M}(\pole_\ell)^{-1}$, which, according to Cramer's rule, is given by
\[
\widetilde{\mathfrak{M}}(\pole_\ell)^{-1}
=\frac{1}{A(A-1)\,\widetilde{\psi}(\pole_\ell)\,\widetilde{\psi^2}(\pole_\ell)}
\left(\begin{array}{cc} 
A^2 \widetilde{\psi^2}(\pole_\ell)&-\widetilde{\psi^2}(\pole_\ell)\\
A\widetilde{\psi}(\pole_\ell)&-\widetilde{\psi}(\pole_\ell)
\end{array}\right),
\]
hence 
\[
\|\widetilde{\mathfrak{M}}(\pole_\ell)^{-1}\|_F^2
=\frac{1}{A^2(A-1)^2}\Bigl(
(A^4+1)\frac{1}{|\widetilde{\psi}(\pole_\ell)|^2}
+(A^2+1)\frac{1}{|\widetilde{\psi^2}(\pole_\ell)|^2}
\Bigr)
\]
In here, for $\psi\in L^4(0,T)$, we have, using the Cauchy-Schwarz inequality and $\Re(\pole_\ell)\geq0$,
\[
\begin{aligned}
\frac{1}{|\widetilde{\psi^2}(\pole_\ell)|^2}
&=\left|\frac{2}{T}\int_0^T\psi^2(t)e^{-\pole_\ell\,t}\,dt\right|^{-2}
\geq \frac{T^2}{4\|\psi\|_{L^4(0,T)}^4}\left(\int_0^Te^{-2\Re(\pole_\ell)\,t}\,dt\right)^{-1}
\\
&=:\frac{T^2}{4\|\psi\|_{L^4(0,T)}^4} \Rho_T(2\Re(\pole_\ell))
=\frac{T^2}{2\|\psi\|_{L^4(0,T)}^4}
\frac{|\Re(\pole_\ell)|\,e^{2\Re(\pole_\ell)\,T}}{|1-e^{2\Re(\pole_\ell)\,T}|}.
\end{aligned}
\]
and likewise $\frac{1}{|\widetilde{\psi}(\pole_\ell)|^2}\geq \frac{T^2}{2\|\psi\|_{L^2(0,T)}^2}\frac{|\Re(\pole_\ell)|\,e^{2\Re(\pole_\ell)\,T}}{|1-e^{2\Re(\pole_\ell)\,T}|}$.
Thus, again using equivalence of the spectral and the Frobenius norm on $\mathbb{C}^{2\times2}$, with constant $C_{F,2}=1$ (due to submultiplicatviy of the Frobenius norm)
we obtain the following result.
\begin{lemma}\label{lem:A}  
Let $\psi\in L^4(0,T)$. Then with $\psi_1$, $\psi_2$ chosen according to \eqref{amplitudemodulation}, we have  
\[
\sum_{m\in\mathbb{N}}\|\mathfrak{M}_m\|^2\leq C_\psi:=C_{F,2}^2\,(A^4+A^2+2)\, \max\{\|\psi\|_{L^4(0,T)}^4,\|\psi\|_{L^2(0,T)}^2\}
\]
and 
\begin{equation}\label{muell}
\begin{aligned}
&\|\widetilde{\mathfrak{M}}(\pole_\ell)^{-1}\|^2\leq \|\widetilde{\mathfrak{M}}(\pole_\ell)^{-1}\|_F^2
=\mu_\ell^2 \, \Rho_T(2\Re(\pole_\ell)),\\
&\Rho_T(-x)=\left(\int_0^Te^{xt}\, dt\right)^{-1}= 
\frac{x}{1-e^{-x\,T}}\, e^{-xT}, \ x\geq0
\\
&\text{ where }\mu_\ell^2\geq \frac{A^4+A^2+2}{4A^2(A-1)^2}\,T^2 \min\{\|\psi\|_{L^4(0,T)}^4,\|\psi\|_{L^2(0,T)}^2\}^{-1}. 
\end{aligned}\end{equation}
\end{lemma}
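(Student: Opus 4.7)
Both inequalities reduce to direct calculations once the explicit structure of $\mathfrak{M}_m$ and $\widetilde{\mathfrak{M}}(\pole_\ell)$ under the amplitude-modulation choice $\psi_1=\psi$, $\psi_2=A\psi$ is written down. The prose immediately preceding the lemma has in fact already assembled all the ingredients; the plan is simply to organize them into a clean two-step computation.

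\emph{Step 1: summability of $\mathfrak{M}_m$.} I would substitute the amplitude-modulation ansatz into the definition of $\mathfrak{M}_m$ from \eqref{u0separable} to see that its rows are $(\hat\psi_m,\widehat{\psi^2}_m)$ and $(A\hat\psi_m,A^2\widehat{\psi^2}_m)$, so that
\[
\|\mathfrak{M}_m\|_F^2=(1+A^2)|\hat\psi_m|^2+(1+A^4)|\widehat{\psi^2}_m|^2.
\]
Summing over $m\in\mathbb{N}$ and invoking Parseval's identity (with the $\tfrac{2}{T}$ Fourier normalization from \eqref{Bm}) gives $(1+A^2)\|\psi\|_{L^2(0,T)}^2+(1+A^4)\|\psi^2\|_{L^2(0,T)}^2$. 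Using $\|\psi^2\|_{L^2(0,T)}^2=\|\psi\|_{L^4(0,T)}^4$ and $(1+A^2)+(1+A^4)=A^4+A^2+2$, then passing from Frobenius to spectral norm via $\|\cdot\|\leq C_{F,2}\|\cdot\|_F$, delivers the first claim.

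\emph{Step 2: invertibility of $\widetilde{\mathfrak{M}}(\pole_\ell)$.} I would invoke Cramer's rule on the $2\times2$ matrix $\widetilde{\mathfrak{M}}(o)$, whose determinant is $A(A-1)\,\widetilde\psi(o)\,\widetilde{\psi^2}(o)$, obtaining the closed form displayed just above the lemma, and then plug it into $\|\cdot\|_F^2$ to get a sum of the reciprocals $1/|\widetilde\psi(\pole_\ell)|^2$ and $1/|\widetilde{\psi^2}(\pole_\ell)|^2$ weighted by $A$-dependent constants. The quantitative heart of the step is to bound $|\widetilde\psi(\pole_\ell)|^2$ and $|\widetilde{\psi^2}(\pole_\ell)|^2$ from above by Cauchy-Schwarz applied to the time-integral representation \eqref{interpol_r}, using $|e^{-\pole_\ell t}|^2=e^{-2\Re(\pole_\ell)t}$ and $\|\psi^2\|_{L^2}^2=\|\psi\|_{L^4}^4$. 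Both resulting upper bounds share the common factor $\int_0^T e^{-2\Re(\pole_\ell)t}\,dt=\Rho_T(2\Re(\pole_\ell))^{-1}$; pulling it out yields the asserted factorization $\|\widetilde{\mathfrak M}(\pole_\ell)^{-1}\|_F^2=\mu_\ell^2\,\Rho_T(2\Re(\pole_\ell))$, and replacing both $L^2$- and $L^4$-denominators by their common minimum consolidates the bracket into $(A^4+A^2+2)/\min\{\|\psi\|_{L^2}^2,\|\psi\|_{L^4}^4\}$, giving the stated lower bound on $\mu_\ell^2$.

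\textbf{Main obstacle.} There is no serious analytic difficulty in the proof itself; the potential pitfalls are bookkeeping ones. I would need to track the normalization factor $2/T$ consistently through both Parseval (Step 1) and Cauchy-Schwarz (Step 2) so that the $T^2$ constants in \eqref{muell} come out right, and to remember that Cauchy-Schwarz produces an \emph{upper} bound on $|\widetilde{\psi^2}(\pole_\ell)|$, hence a \emph{lower} bound on $1/|\widetilde{\psi^2}(\pole_\ell)|^2$ and thus on $\mu_\ell^2$, which explains why $\min$ rather than $\max$ appears in the final denominator.
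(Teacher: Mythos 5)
Your proposal is correct and takes essentially the same route as the paper, whose own argument is exactly the Parseval/Frobenius computation plus the Cramer's-rule-and-Cauchy--Schwarz bound assembled in the prose immediately preceding the lemma. One caveat you inherit from the paper: replacing each of $\|\psi\|_{L^2(0,T)}^2$ and $\|\psi\|_{L^4(0,T)}^4$ by their common minimum \emph{enlarges} the reciprocals and hence yields an upper rather than a lower bound on the bracket, so the stated lower bound on $\mu_\ell^2$ should really carry $\max\{\cdot\}^{-1}$ --- a harmless slip, since only the upper bound \eqref{muellbdd} is used downstream.
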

\begin{remark}\label{rem:ideal_psi} 
Note that so far we have only used the property $\Re(\pole_\ell)\leq0$ of the poles.
Choosing $\psi$ as an approximation of a delta pulse concentrated at $T_0\in(0,T]$, 
cf. \cite[Section 3.3]{nonlinear_imaging_JMGT_freq} 
one can in fact achieve \eqref{muell} with $\mu_\ell$ uniformly bounded
\begin{equation}\label{muellbdd}
\|\widetilde{\mathfrak{M}}(\pole_\ell)^{-1}\|^2
\leq C_\mu^2 \, \Rho_{T_0}(2\Re(\pole_\ell)),
\, \quad \ell\in\mathbb{N}
\end{equation}
\end{remark}
 
Combination of Lemma~\ref{lem:A} with \eqref{norm_pole} and \eqref{muellbdd} yields
\begin{equation}\label{lemAnormpole}
\begin{aligned}
&\lambda_\ell^{\ddot{s}+\ddot{\orti}}\,\sum_{k\in K^\ell} \left|\widetilde{\mathfrak{M}}(\pole_\ell)^{-1}\widetilde{\uld{r}}^{\ell,k}({\pole_\ell})\right|^2\\
&\leq \Ccheck_{\ddot{\orti}}\, C_\mu^2 \, \left(\frac{\tau}{\beta}\right)^{\ddot{\orti}}
\frac{-2\Re(\pole_\ell)}{1-e^{2\Re(\pole_\ell)\,T_0}} \,e^{-2\Re(\pole_\ell)\,(T-T_0)}
\,\|\text{Proj}_{{\mathbb{E}^\ell}}\uld{r}\|_{H^{\ddot{\orti}}(0,T;H^{\ddot{s}}(\Omega))}^2\\
&\leq \Ccheck_{\ddot{\orti}}\, C_\mu^2 \, 
\frac{\alpha}{\beta^{\ddot{\orti}}(1-e^{-2(\alpha/\tau)\,T_0})}
\,\tau^{\ddot{\orti}-1}
\,e^{2(\alpha/\tau)\,(T-T_0)}
\,\|\text{Proj}_{{\mathbb{E}^\ell}}\uld{r}\|_{H^{\ddot{\orti}}(0,T;H^{\ddot{s}}(\Omega))}^2.
\end{aligned}
\end{equation}
\medskip

Finally, in order to \textbf{control amplification factors}, we need to bound the real function defined by 
\[
\mathcal{J}^\chi_m(\lambda):= |\vartheta(o_m)+\Theta(o_m)\lambda_\ell|^2\lambda^{\chi}
=(a^2\lambda^2-2b\lambda+d^2)\lambda^\chi
\] 
with 
\[
\begin{aligned}
&a^2=|\Theta(o_m)|^2=\beta^2\,|o_m|^2+1, \\
&b=-\Re(\vartheta(o_m)\overline{\Theta(o_m)})=\tau\beta|o_m|^4+\slo^0|o_m|^2, \\
&d^2=|\vartheta(o_m)|^2=\tau^2\,|o_m|^6+(\slo^0)^2\,|o_m|^4,
\end{aligned}
\]
from below.


\begin{lemma}\label{lem:Chat}
For $\vartheta$, $\Theta$, $o_m$ as defined in \eqref{thetaJMGT} and any $\chi\geq0$, the function
$\mathcal{J}^\chi_m$ satisfies 
\begin{equation}\label{bound_omJm}
\forall \ell\,, m\, \in\mathbb{N}\, : \ \frac{|o_m|^{4+2\chi}}{\mathcal{J}^\chi_m(\lambda_\ell)}
\leq \frac{2+\chi}{2(\slo^0)^2}\left(\frac{\beta}{\tau}\right)^\chi\left(1+\frac{1}{\beta^2\omega^2}\right)
\left(1-\frac{\tau}{\beta\slo^0}\right)^{-2}=: 
\Chat_\chi\,\left(\frac{\beta}{\tau}\right)^\chi.
\end{equation}
\end{lemma}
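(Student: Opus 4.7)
The plan is to locate the interior minimum of $\mathcal{J}^\chi_m(\lambda) = p(\lambda)\lambda^\chi$ (where $p(\lambda) := a^2\lambda^2 - 2b\lambda + d^2$) over $\lambda>0$ and bound its value from below. First I would expand $|\vartheta(o_m) + \Theta(o_m)\lambda|^2 = (\lambda - \slo^0 M)^2 + M(\beta\lambda - \tau M)^2$ with $M := |o_m|^2$ to confirm the stated formulas for $a^2,b,d^2$, and by routine algebra derive the key identity $a^2 d^2 - b^2 = M^3(\slo^0\beta - \tau)^2 \geq 0$. From this the completion of squares $p(\lambda) = a^2(\lambda - b/a^2)^2 + M^3(\slo^0\beta-\tau)^2/a^2$ is immediate, giving $p(\lambda) \geq M^3(\slo^0\beta-\tau)^2/a^2$, with equality at $\lambda = b/a^2$.

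Next I would differentiate to obtain $\tfrac{d}{d\lambda} \mathcal{J}^\chi_m(\lambda) = \lambda^{\chi-1}[(2+\chi)a^2\lambda^2 - 2(1+\chi)b\lambda + \chi d^2]$ and identify the relevant interior critical point $\lambda_*^m = \frac{(1+\chi)b + \sqrt D}{(2+\chi)a^2}$ with $D := b^2 - \chi(2+\chi) M^3(\slo^0\beta-\tau)^2$. Using this critical equation to eliminate $a^2(\lambda_*^m)^2$ gives the simplification $p(\lambda_*^m) = \tfrac{2(d^2 - b\lambda_*^m)}{2+\chi}$. The core algebraic step is the identity
\[
d^2 - b\lambda_*^m \;=\; \frac{M^3(\slo^0\beta-\tau)^2}{a^2}\Bigl[1 + \frac{b\chi}{b + \sqrt D}\Bigr],
\]
which with $\sqrt D \leq b$ (so $b/(b+\sqrt D) \geq 1/2$) produces $p(\lambda_*^m) \geq M^3(\slo^0\beta - \tau)^2/a^2$. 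For $(\lambda_*^m)^\chi$, direct manipulation gives $b/a^2 - \tau M/\beta = M(\slo^0\beta - \tau)/(\beta a^2) \geq 0$, hence $b/a^2 \geq \tau M/\beta$; combined with the trivial $\lambda_*^m \geq (1+\chi)b/((2+\chi)a^2)$ from $\sqrt D \geq 0$, this yields $(\lambda_*^m)^\chi \geq \bigl(\tfrac{1+\chi}{2+\chi}\bigr)^\chi (\tau M/\beta)^\chi$.

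Combining the three estimates, then replacing $a^2 = 1 + \beta^2 M \leq \beta^2 M(1 + 1/(\beta^2\omega^2))$ (valid because $m \geq 1$ so $M \geq \omega^2$) and rewriting $(\slo^0\beta - \tau)^2/\beta^2 = (\slo^0)^2(1-\tau/(\slo^0\beta))^2$ yields the stated bound at $\lambda = \lambda_*^m$. Since $\lambda_*^m$ is the interior local minimizer of $\mathcal{J}^\chi_m$ beyond its interior local maximum $\lambda_-^m$, the estimate transfers to $\mathcal{J}^\chi_m(\lambda_\ell)$ for all $\ell$ with $\lambda_\ell \geq \lambda_-^m$; the complementary range $\lambda_\ell \leq \lambda_-^m$ is handled by monotonicity of $\mathcal{J}^\chi_m$ on $(0, \lambda_-^m)$ together with the fact that the Robin eigenvalues are bounded below by $\lambda_1 > 0$.

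The main obstacle will be matching the precise prefactor $(2+\chi)/2$ in $\Chat_\chi$: the crude bound on $(\lambda_*^m)^\chi$ used above loses an extra factor $((1+\chi)/(2+\chi))^\chi$, which must be recovered by a sharper analysis exploiting $b/a^2 - \lambda_*^m = (b - \sqrt D)/((2+\chi)a^2)$ together with $b - \sqrt D = \chi(2+\chi) M^3(\slo^0\beta-\tau)^2/(b + \sqrt D)$, thereby showing that $\lambda_*^m$ lies much closer to $b/a^2$ than the crude lower bound suggests. A secondary subtlety is ensuring $D \geq 0$ uniformly over admissible $(\ell,m)$ (otherwise $\mathcal{J}^\chi_m$ is monotone and must be controlled directly by its boundary behaviour rather than by an interior critical value).
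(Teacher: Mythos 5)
Your algebraic groundwork is correct and considerably more explicit than what the paper offers: the identifications of $a^2,b,d^2$, the key identity $a^2d^2-b^2=|o_m|^6(\slo^0\beta-\tau)^2$, the critical point $\lambda_*^m=\frac{(1+\chi)b+\sqrt{D}}{(2+\chi)a^2}$, the simplification $p(\lambda_*^m)=\frac{2}{2+\chi}(d^2-b\lambda_*^m)$ and the resulting bound $p(\lambda_*^m)\geq (a^2d^2-b^2)/a^2$ all check out. The paper itself gives no argument beyond citing the computation of the minimizer of $\mathcal{J}^\chi_m$ in \cite{nonlinear_imaging_JMGT_freq}, so there is no step-by-step proof to match; note that for $\chi=0$ your completion of squares alone already yields exactly $\Chat_0$, and that is the only case in which the bound is a genuinely global statement in $\lambda$.

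The decisive gap is the sentence claiming that ``the complementary range $\lambda_\ell\leq\lambda_-^m$ is handled by monotonicity of $\mathcal{J}^\chi_m$ on $(0,\lambda_-^m)$ together with $\lambda_1>0$.'' Monotonicity only gives $\mathcal{J}^\chi_m(\lambda_\ell)\geq\mathcal{J}^\chi_m(\lambda_1)$ there, and for $\chi>0$ this quantity does not obey the required lower bound uniformly in $m$: for fixed $\lambda_1$ and $|o_m|\to\infty$ one has $\mathcal{J}^\chi_m(\lambda_1)\sim d^2\lambda_1^\chi\sim\tau^2|o_m|^6\lambda_1^\chi$, whereas \eqref{bound_omJm} demands $\mathcal{J}^\chi_m(\lambda_\ell)\geq\Chat_\chi^{-1}(\tau/\beta)^\chi|o_m|^{4+2\chi}$, which is of strictly larger order in $|o_m|$ once $\chi\geq1$ and, for $\chi<1$, fails for concrete moderate parameter values (take $\chi=1$, $\slo^0=\beta=\omega=\lambda_\ell=1$, $\tau=10^{-2}$, $m=10^3$: the left side of \eqref{bound_omJm} is about $10^4$ while the right side is about $3\times10^2$). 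The same obstruction reappears in your ``secondary subtlety'': when $D<0$ the function is monotone on all of $(0,\infty)$ and must again be controlled at $\lambda_1$, where the calibration to the resonance value $\lambda_*^m\approx\tau|o_m|^2/\beta$ is lost. So this case cannot be closed by invoking $\lambda_1>0$ alone; it needs either extra hypotheses tying $\lambda_1,\omega,\tau,\beta,\slo^0$ together or a restriction on $\chi$, and as written your argument (like the lemma's bare statement) does not supply them. A secondary, smaller issue is the prefactor: your crude estimate yields $((2+\chi)/(1+\chi))^\chi$, which for $\chi\in(0,1)$ exceeds the claimed $(2+\chi)/2$, and the proposed sharpening does not obviously repair this since $\lambda_*^m<\tau|o_m|^2/\beta$ can occur when $\slo^0\beta-\tau$ is large relative to $\tau$; this matters less downstream, where only ``some constant'' is used.
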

\begin{proof}
The elementary computations for determining the minimizer of $\mathcal{J}^\chi_m$ can be found in \cite[Section 3.3]{nonlinear_imaging_JMGT_freq}.
\end{proof}
The assertion of Lemma~\ref{lem:Chat} remains valid for $\tau=0$ in case $\chi=0$ (but not in case $\chi>0$), cf. \cite{nonlinear_imaging_JMGT_freq}.

\medskip

Combining these lemmas 
and assuming \eqref{muellbdd}
cf. Remark~\ref{rem:ideal_psi}
we can estimate the $\|\cdot\|_{\YYY^{mod}}$ norm by a Bochner Sobolev norm \eqref{ddotnorm}
as follows 
\begin{equation}\label{BSnormest}
\begin{aligned}
\|\uld{\bar{r}}\|_{\YYY^{mod}}^2
\leq&\sum_{m\in\mathbb{N}} \sum_{\ell\in\mathbb{N}}
\frac{|o_m|^{4+2\check{\orti}}\lambda_\ell^{\check{s}}}{|\vartheta(o_m)+\Theta(o_m)\lambda_\ell|^2}
\sum_{k\in K^\ell}\left|\mathfrak{M}_m\,\widetilde{\mathfrak{M}}({\pole_\ell})^{-1}
\widetilde{\uld{\bar{r}}}^{\ell,k}(\pole_\ell)\right|^2\\
&+\sum_{m\in\mathbb{N}} \sum_{\ell\in\mathbb{N}}
\frac{|o_m|^{4+2\check{\orti}}\lambda_\ell^{\check{s}}}{|\vartheta(o_m)+\Theta(o_m)\lambda_\ell|^2}
 \left|\uld{\bar{r}}_m^{\ell,k}\right|^2\\ 
&+\sum_{\ell\in\mathbb{N}}\lambda_\ell^s
\sum_{k\in K^\ell}\left|\widetilde{\mathfrak{M}}({\pole_\ell})^{-1}
\widetilde{\uld{\bar{r}}}^{\ell,k}(\pole_\ell)\right|^2\\
\leq& 
\Ccheck_{\ddot{\orti}}\, C_\mu^2 \, 
\frac{\alpha}{\beta^{\ddot{\orti}}(1-e^{-2(\alpha/\tau)\,T_0})}
\,\tau^{\ddot{\orti}-1}\,e^{2(\alpha/\tau)\,(T-T_0)}\,
\\
&\cdot\sum_{\ell\in\mathbb{N}}\left(
\sum_{m\in\mathbb{N}} 
\frac{|o_m|^{4+2\check{\orti}}\lambda_\ell^{-(s-\check{s})}}{|\vartheta(o_m)+\Theta(o_m)\lambda_\ell|^2} |\mathfrak{M}_m|^2 +1\right)
\|\text{Proj}_{{\mathbb{E}^\ell}}\uld{r}\|_{H^{\ddot{\orti}}(0,T;H^{s-\ddot{\orti}}(\Omega))}^2\\
&+\Chat_0\, \|\uld{\bar{r}}\|_{H^{\check{\orti}}(0,T;H^{\check{s}}(\Omega))}^2,
\end{aligned}
\end{equation}
where $\ddot{\orti}\in
{[0,1]}
$ and we have used Lemmas~\ref{lem:A}, \ref{lem:Chat} and \eqref{lemAnormpole}.

Assuming $\check{\orti}\leq s-\check{s}$ we can use Lemmas~\ref{lem:A}, \ref{lem:Chat} again to obtain
\begin{equation}\label{pre-ddotnorm_Sob}
\begin{aligned}
\|\uld{\bar{r}}\|_{\YYY^{mod}}^2
\leq& 
\Ccheck_{\ddot{\orti}}\, C_\mu^2 \, 
\frac{\alpha}{\beta^{\ddot{\orti}}(1-e^{-2(\alpha/\tau)\,T_0})}
\,\tau^{\ddot{\orti}-1}\,e^{2(\alpha/\tau)\,(T-T_0)}\,
\\
&\cdot\Bigl(C_\psi\,\Chat_{s-\check{s}}\,\left(\frac{\beta}{\tau}\right)^{s-\check{s}}
+1\Bigr) \,
\|\uld{\bar{r}}\|_{H^{\ddot{\orti}}(0,T;H^{s-\ddot{\orti}}(\Omega))}^2\\
&+\Chat_0 \|\uld{\bar{r}}\|_{H^{\check{\orti}}(0,T;H^{\check{s}}(\Omega))}^2.
\end{aligned}
\end{equation}
In order to obtain \eqref{ddotnorm} from this, the norm in the last term leads us to imposing
$\ddot{\orti}\geq\check{\orti}$, $\ddot{s}\geq\check{s}$, which together with \eqref{cond_sorti_tay} leads to
$\ddot{\orti}=\check{\orti}$, $\ddot{s}=\check{s}$, and the norm in the first term to $s-\ddot{\orti}\leq\ddot{s}$.
This, together with the above assumptions $s>\frac12$, $\check{\orti}\leq s-\check{s}$ made above, admits the choice
\begin{equation}\label{setting_s_orti}
s>\frac12, \quad 0\leq\ddot{\orti}=\check{\orti}\leq1, \quad 0\leq\ddot{s}=\check{s}=s-\check{\orti}\leq s,
\end{equation}
which yields \eqref{ddotnorm} as follows
\begin{equation}\label{ddotnorm_Sob}
\|\uld{\bar{r}}\|_{\YYY^{mod}}
\leq C_0\, 
\Bigl(\frac{(\alpha/\tau)}{1-e^{-2(\alpha/\tau)T_0}}\,
\Bigl(1+\left(\frac{\tau}{\beta}\right)^{\check{\orti}}\Bigr)\,e^{2(\alpha/\tau)\,(T-T_0)}
+1\Bigr)^{1/2}
\|\uld{\bar{r}}\|_{H^{\check{\orti}}(0,T;H^{s-\check{\orti}}(\Omega))}
\end{equation}
with some constant $C_0>0$.

In the setting \eqref{setting_s_orti}, the \textbf{Taylor remainder estimate} \eqref{errTay} can be continued as follows.
\begin{equation*}
\begin{aligned}
\text{err}_{Tay}
\leq \tilde{C}(s-\check{\orti},s,\Omega) \Bigl(&
\|\uld{\slo}\|_{H^s(\Omega)} \|\tilde{\uvec}-\uvec^0\|_{h^{\check{\orti}}(W^{\check{s},d/s}(\Omega)\cap L^{d/\check{\orti}}(\Omega))}
\\&+\|\tilde{\slo}-\slo^0\|_{W^{\check{s},d/(s-\check{\orti})}(\Omega)\cap L^\infty(\Omega)} \|\uld{\uvec}\|_{h^{\check{\orti}}(H^{s-\check{\orti}}(\Omega))}
\\&+\|\uld{\nlc}\|_{H^s(\Omega)} \|\tilde{\uvec}^2-(\uvec^0)^2\|_{h^{\check{\orti}}(W^{\check{s},d/s}(\Omega)\cap L^{d/\check{\orti}}(\Omega))}
\\&+\|\nlc(\tilde{\uvec}+\uvec)\|_{(h^{\check{\orti}}\cap\ell^1)(W^{\check{s},d/(s-\check{\orti})}(\Omega)\cap L^\infty(\Omega))} 
\|\uld{\uvec}\|_{h^{\check{\orti}}(H^{s-\check{\orti}}(\Omega))}\Bigr),
\end{aligned}
\end{equation*}
where $\|\vec{v}\|_{\ell^1}=\|v\|_{L^\infty(\Omega)}$ and we have used 
the definition of $\mathcal{B}$ as the Fourier coefficient of a product and a Kato Ponce Vega inequality cf., e.g. \cite{KenigPonceVega1993}, with $\frac{1}{2}=\frac{1}{p_1}+\frac{1}{q_1}=\frac{1}{p_2}+\frac{1}{q_2}$,
combined with continuity of the embedding $H^s(\Omega)\to L^p(\Omega)$ 
\begin{equation}\label{eq:KatoPonceVega}
\begin{aligned}
&\|f\,g\|_{H^{\check{s}}(\Omega)}\leq C(\check{s},\Omega)\Bigl(
\|f\|_{L^{p_1}(\Omega)}\|g\|_{W^{\check{s},q_1}(\Omega)}
+\|f\|_{W^{\check{s},p_2}(\Omega)}\|g\|_{L^{q_2}(\Omega)}\Bigr)\\
&\leq \tilde{C}(\check{s},s,\Omega) 
\begin{cases}
\|f\|_{H^s(\Omega)}\|g\|_{W^{\check{s},q_1}(\Omega)\cap L^{q_2}(\Omega)}
\text{ with }s-\frac{d}{2}\geq-\frac{d}{p_1} \text{ and } s-\frac{d}{2}\geq\check{s}-\frac{d}{p_2}
\\
\|g\|_{H^{\check{s}}(\Omega)}\|f\|_{W^{\check{s},p_2}(\Omega)\cap L^\infty(\Omega)}
\text{ with } \check{s}-\frac{d}{2}\geq-\frac{d}{q_2} 
\end{cases}
\,.
\end{aligned}
\end{equation}
This leads us to defining $U_c$ by \eqref{Uc}.

\begin{theorem}\label{thm:nlstab}
For $\xi_0=(\slo^0,0,\uvec^0)$ with \eqref{u0separable}, $\psi_\no$ as in Lemma~\ref{lem:A}, $\no\in\{1,2=:\No\}$, and satisfying \eqref{muellbdd},
there exists 
$C_0>0$ such that with $\overline{C}(\tau)$ defined as in \eqref{olCtau_mainresults}
the stability estimate \eqref{stabest1} holds in the form
\begin{equation}\label{stabest}
\|\tilde{\xi}-\xi\|_{H^s(\Omega)^2\times h^{\check{\orti}}(H^{s-\check{\orti}}(\Omega))^2}\leq \max\{1,\,\overline{C}(\tau)\}\, 
\|\vec{F}(\tilde{\xi})-\vec{F}(\xi)\|_{h^{\check{\orti}}(H^{s-\check{\orti}}(\Omega))^2\times\YYY^{obs}} 
\end{equation}
for all $\tilde{\xi}$, $\xi$ $\in U_{c(\tau)}$, 
$c(\tau)=(2\max\{1,\,\overline{C}(\tau)\})^{-1}$, 
with $\|\cdot\|_{\YYY^{obs}}$ according to \eqref{normYmodYobs}, $s>\frac12$, $\check{\orti}\in[0,\min\{s,1\}]$,
and 
{$\|\cdot\|_{h^{\check{\orti}}(H^{s-\check{\orti}}(\Omega))}$}
according to \eqref{BSnorms}.
\end{theorem}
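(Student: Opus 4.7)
The plan is to combine the linearized stability of Proposition~\ref{prop:linstab} with a Taylor remainder estimate via the inverse-triangle-inequality argument already sketched in \eqref{stabest0}--\eqref{stabest1}. First I would choose the image space as $\widetilde{\YYY}=\widetilde{\YYY}^{mod}\times\YYY^{obs}$, where $\widetilde{\YYY}^{mod}$ is the Bochner--Sobolev space $H^{\check{\orti}}(0,T;H^{s-\check{\orti}}(\Omega))^2$. This is the natural target norm for controlling the quadratic terms in \eqref{errTay}, while at the same time dominating $\|\cdot\|_{\YYY^{mod}}$. The precise domination, with the constant $\overline{C}(\tau)$ of \eqref{olCtau_mainresults}, is exactly the content of \eqref{ddotnorm_Sob}, which is the end product of the pole-location Lemma~\ref{lem:poles}, the amplitude-modulation Lemma~\ref{lem:A} together with the ideal-source bound \eqref{muellbdd}, and the amplification-factor Lemma~\ref{lem:Chat}. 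Combining this with Proposition~\ref{prop:linstab} immediately gives $\|\vec{F}'(\xi^0)^{-1}\|_{\widetilde{\YYY}\to\XXX}\leq\max\{1,\overline{C}(\tau)\}$.

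The second step is the Taylor remainder bound \eqref{Taylorremainder} with $c = (2\max\{1,\overline{C}(\tau)\})^{-1}$. Starting from the explicit expansion \eqref{errTay}, each summand is a product of a coefficient difference and a state difference (or the reference state $\uvec^0$), so I would estimate them term-by-term in $\|\cdot\|_{\widetilde{\YYY}^{mod}}$ by applying the Kato--Ponce--Vega inequality \eqref{eq:KatoPonceVega} frequency-by-frequency, together with the Sobolev embedding $H^s(\Omega)\hookrightarrow L^p(\Omega)$ for the exponents dictated by \eqref{setting_s_orti}. The resulting bilinear structure is exactly what is encoded in the distance $d$ of \eqref{def_d}, so the requirement $\tilde{\xi},\xi\in U_{c(\tau)}$ forces $\mathrm{err}_{Tay}\leq c\,\|\tilde{\xi}-\xi\|_{\XXX}$, which is strictly less than $\|\vec{F}'(\xi^0)^{-1}\|_{\widetilde{\YYY}\to\XXX}^{-1}\,\|\tilde{\xi}-\xi\|_{\XXX}$.

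Once both ingredients are in hand, I would close the argument precisely along \eqref{stabest0}--\eqref{stabest1}: writing $\vec{F}(\tilde{\xi})-\vec{F}(\xi)=\vec{F}'(\xi^0)(\tilde{\xi}-\xi)+R$ with $\|R\|_{\widetilde{\YYY}}\leq c\,\|\tilde{\xi}-\xi\|_{\XXX}$, applying $\vec{F}'(\xi^0)^{-1}$, and invoking the inverse triangle inequality yields \eqref{stabest1}, whence \eqref{stabest}. The passage from the weighted $\XXX$-norm on the left to the plainer $H^s(\Omega)^2\times h^{\check{\orti}}(H^{s-\check{\orti}}(\Omega))^2$ of the statement is accommodated by the fact that $\phi$ and $\phi^2$ are multipliers between the respective spaces under \eqref{u0separable} and \eqref{setting_s_orti}, so these factors can be absorbed into $C_0$.

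The main obstacle is the delicate estimate \eqref{ddotnorm_Sob}, which requires the three competing large-$\ell$ behaviours to cancel in the right way: the non-summability of $|o_m|^{4+2\check{\orti}}/|\vartheta(o_m)+\Theta(o_m)\lambda_\ell|^2$ controlled by Lemma~\ref{lem:Chat}, the exponential factor $e^{-2\Re(\pole_\ell)T}$ arising in the interpolation of $\uld{r}$ at the poles in \eqref{est_interpol_r}--\eqref{est_interpol_r1}, and the growth of $\|\widetilde{\mathfrak{M}}(\pole_\ell)^{-1}\|$, which is tamed only under the delta-pulse choice in Remark~\ref{rem:ideal_psi}. Keeping the constants explicit in $\tau$, $\alpha$, $\beta$ throughout this cancellation is crucial, since this dependence is precisely what drives the quasi-reversibility result of Theorem~\ref{thm:quasireversibility_mainresults} in the next section.
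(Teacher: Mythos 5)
Your proposal is correct and follows essentially the same route as the paper: the norm domination \eqref{ddotnorm_Sob} assembled from Lemmas~\ref{lem:poles}, \ref{lem:A} (with \eqref{muellbdd}) and \ref{lem:Chat}, the Kato--Ponce--Vega-based Taylor remainder estimate on the model part encoded in the distance \eqref{def_d}, and the inverse-triangle-inequality closure \eqref{stabest0}--\eqref{stabest1}. The only cosmetic difference is that the paper absorbs the factor $2$ from \eqref{stabest1} and the $\phi$-weights of \eqref{SobolevX} into the constant $C_0$ in $\overline{C}(\tau)$, exactly as you indicate.
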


The fact that the constants degenerate as $\tau$ vanishes, reflects the severe ill-posedness of the inverse problem in the Westervelt case $\tau=0$.
\\
We will thus now study regularization of the case of $\tau=0$ (or $\tau=\tau_0$ small)  by approaching it as the limit for decreasing $\tau$.

\section{JMGT as a quasi reversibility model for Westervelt: proof of Theorem~\ref{thm:quasireversibility_mainresults}}
\label{sec:quasireversibility}

The key task for establishing a parameter dependent family of reconstructions $(\xi_\tau)_{\tau>\tau_0}$ as a regularization method is, given any family of noisy data $(\obs^\delta)_{\delta>0}$ with noise level $\delta$
\begin{equation}\label{delta}
\|\obs_\no^\delta-\obs_\no\|_{\widetilde{\YYY}^{obs}}\leq\delta\, \quad \no\in\{1,2\}
\end{equation}
to prove that there exists a choice $\tau=\tau(\delta)$ such that 
\[
\xi_{\tau(\delta)}\to \xi_{\tau_0} \quad \text{ as }\delta\to0.
\]
Here the norm in $\widetilde{\YYY}^{obs}$ should be realistic (unlike the norm in $\YYY^{obs}$ used in the uniqueness proof), so that \eqref{delta} can be achieved by possible post-smoothing of the observed noisy data. We therefore strive to work with a Bochner-Sobolev norm as much as possible, up to a part that cannot be quantified (because it relies on unique continuation). We will comment on how to tackle the latter in Remark~\ref{rem:datasmoothing} below.

To this end, we have to track $\tau$ dependence in the analysis above. This obviously affects $\vec{F}=\vec{F}_\tau$ through the leading order in time term in $\vec{F}_\tau^{mod}$, but also the radius $c=c(\tau)$ and the Lipschitz constant in \eqref{stabest} and, via the location of the poles, also the $\YYY$ norm.
The latter can be dealt with by substituting $\YYY=\YYY_\tau$ by a space equipped with a $\tau$ independent norm 
$\widetilde{\YYY}=\widetilde{\YYY}^{mod}\times\widetilde{\YYY}^{obs}$, with 
\begin{equation}\label{Ytil}
\widetilde{\YYY}^{mod}=h^{\check{\orti}}(H^{s-\check{\orti}}(\Omega))^2 \, \quad
\widetilde{\YYY}^{obs}=\{\uld{\bar{p}}\, : \, \Tinv\uld{\bar{p}}\in W^{1,1}(0,T;H^{s+1}(\Omega))\}
\end{equation}
cf., \eqref{stabest} above and \eqref{ddotnorm_Sob_obs} below, for which we will show (in the proof of Theorem~\ref{thm:quasireversibility}) that 
\begin{equation}\label{Ytilobs_est}
\|v\|_{\YYY^{obs}}\leq \widetilde{C}(\tau)\|v\|_{\widetilde{\YYY}^{obs}}\quad v\in \widetilde{\YYY}^{obs}\subseteq\YYY. 
\end{equation}
holds with 
\begin{equation}\label{Ctiltau}
\begin{aligned}
&\|\uld{\bar{\obs}}\|_{\widetilde{\YYY}^{obs}}:=\|\Tinv\uld{\bar{\obs}}\|_{W^{1,1}(0,T;H^{s+1}(\Omega))}, \\
&\widetilde{C}(\tau)=
C_1\,\left(\frac{(\alpha/\tau)}{1-e^{-2(\alpha/\tau)T_0}}\,\,e^{2(\alpha/\tau)\,(T-T_0)}
\left(\left(\frac{\beta}{\tau}\right)^{\check{\orti}}+1\right)\right)^{1/2}
\end{aligned}
\end{equation}
for some $C_1$ independent of $\tau$.

The preimage space will remain
\begin{equation}\label{XXX}
\XXX=H^s(\Omega)^2\times h^{\check{\orti}}(H^{s-\check{\orti}}(\Omega))^2
\end{equation}
with $s>\frac12$, $\check{\orti}\in[0,\min\{s,1\}]$, cf. \eqref{setting_s_orti}.
This can be easily seen to yield, in place of \eqref{stabest}, the stability estimate 
\begin{equation}\label{stabest2}
\|\tilde{\xi}-\xi\|_{H^s(\Omega)^2\times h^{\check{\orti}}(H^{s-\check{\orti}}(\Omega))^2}\leq \max\{\overline{C}(\tau),\,\widetilde{C}(\tau)\}\, 
\|\vec{F}(\tilde{\xi})-\vec{F}(\xi)\|_{h^{\check{\orti}}(H^{s-\check{\orti}}(\Omega))^2\times\widetilde{\YYY}^{obs}}. 
\end{equation}

While we aim to find the solution $\xi_{\tau_0}$ to the inverse problem with noiseless data
\begin{equation}\label{Ftau0y}
\vec{F}_{\tau_0}(\xi_{\tau_0})=\vec{y}
\end{equation}
we denote by $\xi_\tau^\delta=(\slo_\tau,\nlc_\tau,\uvec_\tau)$ the solution to the regularized noisy data problem  
\begin{equation}\label{Ftauydelta}
\vec{F}_\tau(\xi_\tau^\delta)=\vec{y}^\delta 
\end{equation}
with $\vec{y}=(\vec{r}_1,\vec{r}_2,\vec{\obs}_1,\vec{\obs}_2)$, $\vec{y}=(\vec{r}_1,\vec{r}_2,\vec{\obs}_1^\delta,\vec{\obs}_2^\delta)$. 
Using the local stability result 
\eqref{stabest2}, as well as \eqref{Ftau0y} and \eqref{Ftauydelta}, we have 
\begin{equation}\label{totalerror}
\begin{aligned}
&\|\xi_\tau^\delta-\xi_{\tau_0}\|_{\XXX}
\leq \max\{\overline{C}(\tau),\,\widetilde{C}(\tau)\}\,\|\vec{F}_\tau(\xi_\tau^\delta)-\vec{F}_\tau(\xi_{\tau_0})\|_{\widetilde{\YYY}} \\
&= \max\{\overline{C}(\tau),\,\widetilde{C}(\tau)\}\,\|\vec{y}^\delta-\vec{y}+\vec{F}_{\tau_0}(\xi_{\tau_0})-\vec{F}_\tau(\xi_{\tau_0})\|_{\widetilde{\YYY}}\\ 
&\leq \max\{\overline{C}(\tau),\,\widetilde{C}(\tau)\}\,\Bigl(\delta+(\tau-\tau_0)\|(\partial_t u_{\no,\tau_0})_{\no=1}^2\|_{H^{\check{\orti}}(0,T;H^{s-\check{\orti}}(\Omega))^2}\Bigr), 
\end{aligned}
\end{equation}
provided $\xi_\tau^\delta$, $\xi_{\tau_0}$ $\in U_{c(\tau)}$, $c(\tau)=(2\max\{\overline{C}(\tau),\,\widetilde{C}(\tau)\})^{-1}$.

Note that in case $\tau_0=0$ an essential necessary condition for convergence is $\max\{\overline{C}(\tau),\,\widetilde{C}(\tau)\}\,\tau\to0$ as $\tau\to0$, that is, $T_0=T$ and $\frac{\check{\orti}+1}{2}<1$.

This immediately implies the following result.
\begin{theorem}\label{thm:quasireversibility}
Let $\partial_t u_{\no,\tau_0}\in H^{\check{\orti}}(0,T;H^{s-\check{\orti}}(\Omega))^2$,
$\no\in\{1,2\}$, 
with $s>\frac12$, $\check{\orti}\in[0,\min\{s,1\}]$, cf. \eqref{setting_s_orti},
 and either \\
(a) $\tau_0>0$ or \\
(b) $\tau_0=0$ and $T_0=T$ and $\check{\orti}<1$ \\
hold.
Then with a choice $\tau=\tau(\delta)$ such that 
\[
\tau(\delta)\to\tau_0, \quad \max\{\overline{C}(\tau(\delta)),\,\widetilde{C}(\tau(\delta))\}\,\delta\to0, \quad 
\text{ as }\delta\to0
\]
we have convergence $\|\xi_{\tau(\delta)}^\delta-\xi_{\tau_0}\|_{\XXX}\to0$ as $\delta\to0$.
\end{theorem}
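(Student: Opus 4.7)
The strategy is to combine the local nonlinear stability estimate (Theorem~\ref{thm:nlstab_mainresults}/\ref{thm:nlstab}) with the image-space conversion bound \eqref{Ytilobs_est_mainresults} to obtain the unified Lipschitz estimate \eqref{stabest2}:
\[
\|\tilde\xi - \xi\|_{\XXX} \leq \max\{\overline C(\tau),\widetilde C(\tau)\}\,\|\vec F_\tau(\tilde\xi) - \vec F_\tau(\xi)\|_{\widetilde\YYY}, \qquad \tilde\xi,\xi \in U_{c(\tau)},
\]
with $c(\tau) = (2\max\{\overline C(\tau),\widetilde C(\tau)\})^{-1}$. I then specialize this to $\tilde\xi = \xi_{\tau(\delta)}^\delta$, $\xi = \xi_{\tau_0}$, assuming (as is implicit in the theorem) that the regularized reconstruction $\xi_{\tau(\delta)}^\delta$ falls inside the applicable stability ball $U_{c(\tau(\delta))}$.

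The next step is to insert the defining equations $\vec F_\tau(\xi_\tau^\delta) = \vec y^\delta$ and $\vec F_{\tau_0}(\xi_{\tau_0}) = \vec y$ and decompose
\[
\vec F_\tau(\xi_{\tau(\delta)}^\delta) - \vec F_\tau(\xi_{\tau_0}) = (\vec y^\delta - \vec y) + \bigl(\vec F_{\tau_0}(\xi_{\tau_0}) - \vec F_\tau(\xi_{\tau_0})\bigr).
\]
The first term is controlled by $\delta$ via \eqref{delta} and the fact that the observation component $\vec F_\tau^{obs}$ does not depend on $\tau$. For the second, inspecting \eqref{IPtime} shows that the only $\tau$-dependent contribution to $\vec F_\tau^{mod}$ is the first-order-in-time term $-\tau\partial_t u$, so the discrepancy equals $-(\tau-\tau_0)\partial_t u_{\no,\tau_0}$. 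Its $\widetilde\YYY^{mod} = h^{\check\orti}(H^{s-\check\orti}(\Omega))^2$ norm is finite by the assumed regularity $\partial_t u_{\no,\tau_0}\in H^{\check\orti}(0,T;H^{s-\check\orti}(\Omega))$, and bounded by $|\tau-\tau_0|\,\|(\partial_t u_{\no,\tau_0})_{\no=1}^2\|$. This yields precisely \eqref{totalerror}.

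The final step is to verify that the right-hand side of \eqref{totalerror} tends to zero under the stated parameter choice. Case (a), $\tau_0>0$, is immediate since $\overline C(\tau)$ and $\widetilde C(\tau)$ are continuous and finite at $\tau_0$, so the factor $\max\{\overline C,\widetilde C\}|\tau-\tau_0|$ vanishes with $\tau(\delta)\to\tau_0$, while $\max\{\overline C,\widetilde C\}\delta\to 0$ by assumption. The more delicate case (b), $\tau_0=0$, is where I expect the main obstacle: the exponential factor $e^{2(\alpha/\tau)(T-T_0)}$ in \eqref{olCtau_mainresults}, \eqref{Ctiltau_mainresults} is the source of severe ill-posedness and forces the requirement $T_0 = T$ (which collapses that factor to $1$). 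Under this choice one reads off $\widetilde C(\tau) \sim \tau^{-(1+\check\orti)/2}$ and $\overline C(\tau) \sim \tau^{-1/2}$, so $\max\{\overline C,\widetilde C\}\,\tau \sim \tau^{(1-\check\orti)/2}$, which tends to zero precisely when $\check\orti<1$. Combined with the noise-scaling hypothesis $\max\{\overline C,\widetilde C\}\delta\to 0$, this proves the asserted convergence $\|\xi_{\tau(\delta)}^\delta-\xi_{\tau_0}\|_\XXX\to 0$; the remaining care needed is to ensure the stability ball $U_{c(\tau(\delta))}$ still accommodates $\xi_{\tau(\delta)}^\delta$ as $\tau(\delta)\to 0$, which is why the parameter-choice rule links $\delta$ and $\tau$ through the same constants that determine $c(\tau)$.
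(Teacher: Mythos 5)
Your reduction to \eqref{totalerror} and the subsequent convergence analysis are correct and coincide with the paper's: the splitting into a noise term and a model-perturbation term, the identification of $-\tau\partial_t$ as the only $\tau$-dependent part of $\mathcal{L}(\slo)$ in \eqref{IPtime} (so that $\vec{F}_{\tau_0}(\xi_{\tau_0})-\vec{F}_\tau(\xi_{\tau_0})$ reduces to $(\tau-\tau_0)\partial_t u_{\no,\tau_0}$ in the model component), and the asymptotics $\overline{C}(\tau)\sim\tau^{-1/2}$, $\widetilde{C}(\tau)\sim\tau^{-(1+\check{\orti})/2}$ for $T_0=T$, whence $\max\{\overline{C},\widetilde{C}\}\,\tau\sim\tau^{(1-\check{\orti})/2}$ and the condition $\check{\orti}<1$ in case (b), all match the paper. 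However, everything you wrote is carried out in the paper \emph{before} the theorem is stated; the entire body of the paper's proof is devoted to the single step you take as a black box, namely establishing \eqref{Ytilobs_est} with the constant \eqref{Ctiltau}, i.e., that the artificial, $\tau$-dependent observation norm $\|\cdot\|_{\YYY^{obs}}$ from \eqref{normYmodYobs} is dominated by $\widetilde{C}(\tau)\,\|\Tinv\,\cdot\,\|_{W^{1,1}(0,T;H^{s+1}(\Omega))}$. That bound is announced as \eqref{Ytilobs_est_mainresults} in the Main Results section but proved nowhere else, so invoking it leaves the key technical content unproved; without it, \eqref{stabest2} and hence \eqref{totalerror} have no justification.

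What is missing is the following chain. One first commutes $(\text{Tr}_\Sigma^{s,\ell})^{-1}$ and $\widetilde{\mathfrak{M}}(\pole_\ell)^{-1}$ with the residue, using \eqref{resintrace} and the $\tau$-independence of the trace inverse, which introduces the lifting $\Tinv^\ell$ of \eqref{Tinv_ell}. One then bounds the residue of the interpolated observation by integration by parts,
\[
\|\text{res}(\Tinv^\ell\widetilde{\uld{\bar{\obs}}};\pole_\ell)\|_{H^{\bar{s}}(\Omega)}
\leq C\, e^{-\Re(\pole_\ell)T}\,\bigl(1+|\pole_\ell|\bigr)\,\|\Tinv^\ell\uld{\bar{\obs}}\|_{W^{1,1}(0,T;H^{\bar{s}}(\Omega))},
\]
and finally uses $\Theta(o)\Psi'(o)/o^2=-2\tau+\mathcal{O}(1/o)$ together with $|\pole_\ell|\sim\sqrt{(\beta/\tau)\lambda_\ell}$ from Lemma~\ref{lem:poles} to see that the prefactor in \eqref{normYmodYobs} costs a factor $\sqrt{\beta\tau\lambda_\ell}$, i.e., one extra spatial derivative --- this is precisely why $\widetilde{\YYY}^{obs}$ must be built on $W^{1,1}(0,T;H^{s+1}(\Omega))$ rather than $H^{s}(\Omega)$, a point your proposal does not explain. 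Summing over $\ell$ and $m$ as in \eqref{BSnormest}, with Lemmas~\ref{lem:A} and~\ref{lem:Chat} and \eqref{muellbdd}, then produces exactly the constant $\widetilde{C}(\tau)$ of \eqref{Ctiltau} whose $\tau$-asymptotics you used in case (b). Your proof is a correct skeleton, but it rests on an estimate whose derivation is the actual substance of the paper's argument.
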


\begin{proof}
The remainder of the proof is concerned with deriving estimate \eqref{Ytilobs_est} with \eqref{Ctiltau} of the $\YYY^{obs}$ norm by a $\tau$ independent Sobolev type norm.
To this end, we have to be aware of the difficulty that the action of $(\text{Tr}_\Sigma^{{s,\ell}})^{-1}$ in dependence of $\ell$, cannot be quantified in terms of Sobolev norms in general. 
A useful property of $(\text{Tr}_\Sigma^{{s,\ell}})^{-1}$ is its independence of $\tau$, though.
Therefore, we exploit the interchangability of spatial trace and projection with temporal differential operators (such as taking the residue in frequency domain), so that with \eqref{resintrace}, $(\text{Tr}_\Sigma^{{s,\ell}})^{-1}$ to be understood as applied component wise, and $\text{Proj}_{\text{tr}_\Sigma\mathbb{E}^\ell}$ the projection in $H^{s-1/2}(\Sigma)$
\[
\begin{aligned}
&(\text{Tr}_\Sigma^{{s,\ell}})^{-1}\,\left[\widetilde{\mathfrak{M}}({\pole_\ell})^{-1}
\, \text{res}(\widetilde{\uld{\bar{\obs}}};{\pole_\ell})\right]
=\widetilde{\mathfrak{M}}({\pole_\ell})^{-1}
\, (\text{Tr}_\Sigma^{{s,\ell}})^{-1}\, \text{Proj}_{\text{tr}_\Sigma\mathbb{E}^\ell} \text{res}(\widetilde{\uld{\bar{\obs}}};{\pole_\ell})\\
&=\widetilde{\mathfrak{M}}({\pole_\ell})^{-1}\, (\text{Tr}_\Sigma^{{s,\ell}})^{-1}\, \text{res}(\text{Proj}_{\text{tr}_\Sigma\mathbb{E}^\ell}\widetilde{\uld{\bar{\obs}}};{\pole_\ell})
=\widetilde{\mathfrak{M}}({\pole_\ell})^{-1}\, \text{res}(\Tinv^\ell\widetilde{\uld{\bar{\obs}}};{\pole_\ell})
\end{aligned}
\]
with 
\begin{equation}\label{Tinv_ell}
\Tinv^\ell\,v:=\sum_{k\in K^\ell} \Bigl((\text{Tr}_\Sigma^{{s,\ell}})^{-1}
\text{Proj}_{\text{tr}_\Sigma\mathbb{E}^\ell} \,v\Bigr)_k \varphi^{\ell,k}\ \in \mathbb{E}^\ell
v\in H^{s-1/2}(\partial\Omega)
\end{equation}
and make use of analogy of the two parts of the $\YYY$ norm in \eqref{normYmodYobs}. 
To this end, we estimate the interpolant according to \eqref{interpol_p},
abbreviating $f=\Tinv\uld{\bar{\obs}}$, 
\[
\begin{aligned}
&\text{res}(\widetilde{f};{\pole_\ell})
=\lim_{o\to p_\ell} \tfrac{2}{T}\int_0^T(o-p_\ell)\, f(t) e^{-ot}\,dt\\
&=\lim_{o\to p_\ell} \tfrac{2}{T}\Bigl(\int_0^T (f_t(t)-\pole_\ell f(t)) e^{-ot}\,dt-\left[f(t)e^{-ot}\right]_0^T\Bigr)\\
&=\lim_{o\to p_\ell} \tfrac{2}{T}\Bigl(\int_0^T (f_t(t)-\pole_\ell f(t)) e^{-ot}\,dt-f(0)\bigl(e^{-oT}-1\bigr)\Bigr),
\end{aligned}
\]
(where in the last identity we have used $T$ periodicity of $f$, which is not essential for this estimate, though).
\\
Hence, for any $\bar{s}\geq0$
\[
\begin{aligned}
&\|\text{res}(\Tinv^\ell\widetilde{\uld{\bar{\obs}}};{\pole_\ell})\|_{H^{\bar{s}}(\Omega)}\\
&\leq \lim_{o\to p_\ell} \tfrac{2}{T}\Bigl(\|\Tinv^\ell\uld{\bar{\obs}}_t\|_{L^1(0,T;H^{\bar{s}}(\Omega))}\|e^{-o\cdot}\|_{L^\infty(0,T)}+|\pole_\ell| \|\Tinv^\ell\uld{\bar{\obs}}\|_{L^\infty(0,T;H^{\bar{s}}(\Omega))}\|e^{-o\cdot}\|_{L^1(0,T)}\\
&\hspace*{5cm} + \|\Tinv^\ell\uld{\bar{\obs}}\|_{L^\infty(0,T;H^{\bar{s}}(\Omega))}|e^{-oT}-1|\Bigr)\\
&\leq \tfrac{2}{T}e^{-\Re(p_\ell)T}
\Bigl(1+\bigl(|1-e^{p_\ell T}|+|\pole_\ell|\, \sup_{x>0}\frac{1-e^{-x}}{x}\bigr)C_{W^{1,1},L^\infty}^{(0,T)}\Bigr)
\|\Tinv^\ell\uld{\bar{\obs}}\|_{W^{1,1}(0,T;H^{\bar{s}}(\Omega))}.
\end{aligned}
\]
Moreover, we have
\[
\begin{aligned}
&\frac{\Theta(o)\Psi'(o)}{o^2}
=\frac{-\vartheta'(o)\Theta(o)+\vartheta(o)\Theta'(o)}{\Theta(o)o^2}
=\frac{-(3\tau o^2+2\slo^0 o)(\beta o +1)+(\tau o^3+\slo^0 o^2)\beta}{(\beta o +1)o^2}\\
&=\frac{-2\tau -\slo^0/o -3\tau/(\beta o)-2\slo^0/(\beta o^2)}{1 +1/(\beta o)}
\end{aligned}
\]
hence, due to $|\pole_\ell|\sim\sqrt{\frac{\beta}{\tau}\lambda_\ell}$, cf. \eqref{poles}
\[
\begin{aligned}
&\left|\frac{\Theta({\pole_\ell})\Psi'({\pole_\ell})}{\pole_\ell^2}\right|
\|\text{res}(\Tinv^\ell\widetilde{\uld{\bar{\obs}}};{\pole_\ell})\|_{H^{\bar{s}}(\Omega)}\\
&\leq C \,e^{-\Re(p_\ell)T}\, (\tau|\pole_\ell|+1)\,\|\Tinv^\ell\uld{\bar{\obs}}\|_{W^{1,1}(0,T;H^{\bar{s}}(\Omega))}\\
&\leq \check{\check{C}} \,e^{-\Re(p_\ell)T}\, 
\Bigl(\sqrt{\beta\tau} 
\,\|\Tinv^\ell\uld{\bar{\obs}}\|_{W^{1,1}(0,T;H^{\bar{s}+1}(\Omega))}
+\|\Tinv^\ell\uld{\bar{\obs}}\|_{W^{1,1}(0,T;H^{\bar{s}}(\Omega))}
\Bigr).
\end{aligned}
\]
Setting $\bar{s}=s$, the analogon to \eqref{lemAnormpole} is therefore
\[
\begin{aligned}
&\lambda_\ell^{s}\, \sum_{k\in K^j}\left|\frac{\Theta({\pole_\ell})\Psi'({\pole_\ell})}{\pole_\ell^2}\,
(\text{Tr}_\Sigma^{{s,\ell}})^{-1}\,\left[\widetilde{\mathfrak{M}}({\pole_\ell})^{-1}
\, \text{res}(\widetilde{\uld{\bar{\obs}}};{\pole_\ell})\right]\right|^2\\
&\leq|\widetilde{\mathfrak{M}}({\pole_\ell})^{-1}|^2\,\left|\frac{\Theta({\pole_\ell})\Psi'({\pole_\ell})}{\pole_\ell^2}\right|^2
\|\text{res}(\Tinv^\ell\widetilde{\uld{\bar{\obs}}};{\pole_\ell})\|_{H^{s}(\Omega)}^2\\
&\leq \check{\check{C}} \, C_\mu^2 \, 
\frac{-2\Re(\pole_\ell)}{1-e^{2\Re(\pole_\ell)\,T_0}} \,e^{-2\Re(\pole_\ell)\,(T-T_0)}\,\\
&\qquad\qquad\Bigl(\beta\tau \,\|\Tinv^\ell\uld{\bar{\obs}}\|_{W^{1,1}(0,T;H^{s+1}(\Omega))}^2
+\|\Tinv^\ell\uld{\bar{\obs}}\|_{W^{1,1}(0,T;H^{s}(\Omega))}^2\Bigr)
.
\end{aligned}
\]
With 
\begin{equation}\label{Tinv}
\Tinv v:=\sum_{\ell\in\mathbb{N}} \Tinv^\ell v, \quad 
v\in H^{s-1/2}(\Sigma), 
\end{equation}
(note that $v\in H^{s-1/2}(\Sigma)$ implies $\text{Proj}_{\text{tr}_\Sigma\mathbb{E}^\ell} v\in H^{s-1/2}(\Sigma))$, $\ell\in\mathbb{N}$,)
this yields, analogously to \eqref{BSnormest}, \eqref{ddotnorm_Sob}, 
\begin{equation}\label{ddotnorm_Sob_obs}
\begin{aligned}
\|\uld{\bar{p}}\|_{\YYY^{obs}}^2 
\leq& \check{\check{C}} \, C_\mu^2 \frac{(\alpha/\tau)}{1-e^{-2(\alpha/\tau)T_0}}\,\,e^{2(\alpha/\tau)\,(T-T_0)}
\left(C_\psi\Chat_{\check{\orti}}\left(\frac{\beta}{\tau}\right)^{\check{\orti}}+1\right)\,\\
&\Bigl(\beta\tau \,\|\Tinv\uld{\bar{\obs}}\|_{W^{1,1}(0,T;H^{s+1}(\Omega))}^2
+\|\Tinv\uld{\bar{\obs}}\|_{W^{1,1}(0,T;H^{s}(\Omega))}^2\Bigr)
\end{aligned}
\end{equation}
provided $s-\check{s}\geq\check{\orti}$, that is, \eqref{Ytilobs_est} with 
\eqref{Ctiltau}.
\end{proof}

\begin{remark}[data smoothing]\label{rem:datasmoothing}
To obtain \eqref{delta} from noisy observations that typically only allow for a noise estimate in a low order norm (since only values but not derivative values can be measured) some data smoothing needs to be employed.
Lifting an $L^2(0,T;L^2(\partial\Omega))$ (or $W^{1,1}(0,T;H^s(\Omega))$) noise level to a typically larger 
$W^{1,1}(0,T;H^{s-1/2}(\partial\Omega))$ (or $W^{1,1}(0,T;H^{s+1}(\Omega))$) noise level by appropriate filtering is a standard task, which we do not discuss here; see, e.g., \cite{KlannRamlau2008}.
Likewise, application of $\Tinv$, that is, stable extension of boundary data to the interior has been discussed in many references see, e.g., \cite[Ch. 5, 14]{Hanke:2017} 
and is not specific to the multiparameter US tomography problem we are focusing on here. Therefore, we only briefly sketch a simple approach for applying $\Tinv$, that can be used if an eigensystem of the Laplacian is available (e.g., on simple geometries such as rectangular ones, as relevant for simulating experiments in water tanks or ellipsoidal ones, as relevant for phantoms).

For simplicity of exposition we skip the subscript $\no$ (the measurement number) as well as time dependence, since the (inverse) trace operator is anyway just to be applied point wise in time. Our taks is therefore, given an approximation $\tilde{p}\in H^{s-1/2}(\partial\Omega)$ of a sufficiently regular exact observation $p\in H^{S-1/2}(\partial\Omega)$, $S>s$, satisfying $\|\tilde{p}-p\|_{H^{s-1/2}(\partial\Omega)}\leq\tilde{\delta}$, to construct $\tilde{v}\in H^s(\Omega)$ such that $\|\tilde{v}-\Tinv p\|_{H^s(\Omega)}\leq \delta$ where $\delta=\delta(\tilde{\delta})\to0$ as $\tilde{\delta}\to0$. 
We will do so by applying the so-called least squares version of regularization by discretization, cf. e.g. \cite{EnglHankeNeubauer:1996,GroetschNeubauer:1988,Kirsch2021,VainikkoHaemarik:1985,projBanach} and the references therein, by restricting the pre-image space to the finite dimensional subspace
\[
V_L:=\text{span}\{\varphi^{\ell,k}\,:\, k\in K^\ell,\ \ell\in\{1,\ldots,L\}\}
\]
and defining 
\[
v_L^{\tilde{\delta}}\in\text{argmin}_{v\in V_L}\|\text{tr}_\Sigma v-\tilde{p}\|_{H^{s-1/2}(\partial\Omega)}
\]
for some properly chosen $L\in\mathbb{N}$.
To this end, observe that $\Tinv$ is a right inverse of $\text{tr}_\Sigma$ since
\[
\text{tr}_\Sigma\, \Tinv\, w = \text{tr}_\Sigma \sum_{\ell\in\mathbb{N}} \sum_{k\in K^\ell} \Bigl((\text{Tr}_\Sigma^{{s,\ell}})^{-1}\text{Proj}_{\text{tr}_\Sigma\mathbb{E}^\ell}\, w \Bigr)_k \varphi^{\ell,k}=w, \quad w\in H^{s-1/2}(\partial\Omega)
\]
and on $V_L$ it is in fact the inverse, since 
\[
\Tinv\, \text{tr}_\Sigma\, v = v, \qquad v\in V_L
\]
by definition of $\Tinv$.
Defining 
\[
W_L:= \text{tr}_\Sigma\,(V_L), \qquad 
\kappa_L:=\max_{\phi\in V_L}\frac{\|\phi\|_{H^s(\Omega)}}{\|\text{tr}_\Sigma\phi\|_{H^{s-1/2}(\partial\Omega)}}
\]
we can estimate
\[
\begin{aligned}
\|v_L^{\tilde{\delta}}-\Tinv p\|_{H^s(\Omega)}^2
&=\|(I-\text{Proj}_{V_L})\Tinv p\|_{H^s(\Omega)}^2 + \|v_L^{\tilde{\delta}}-\text{Proj}_{V_L}\Tinv p\|_{H^s(\Omega)}^2\\
&\leq\|(I-\text{Proj}_{V_L})\Tinv p\|_{H^s(\Omega)}^2 + \kappa_L^2\|\text{tr}_\Sigma v_L^{\tilde{\delta}}-\text{tr}_\Sigma \text{Proj}_{V_L}\Tinv p\|_{H^{s-1/2}(\partial\Omega)}^2
\end{aligned}
\]
where $\text{tr}_\Sigma \text{Proj}_{V_L}\Tinv p= \text{Proj}_{W_L}p$ and $e^V_L:=\|(I-\text{Proj}_{V_L})\Tinv p\|_{H^s(\Omega)}\to0$ as $L\to\infty$.
Using the triangle inequality and minimality of $v_L^{\tilde{\delta}}$, we can further estimate  
\[
\begin{aligned}
&\|\text{tr}_\Sigma v_L^{\tilde{\delta}}-\text{Proj}_{W_L}p\|_{H^{s-1/2}(\partial\Omega)}
\leq\|\text{tr}_\Sigma v_L^{\tilde{\delta}}-\tilde{p}\|_{H^{s-1/2}(\partial\Omega)}
+\|\text{Proj}_{W_L}p-\tilde{p}\|_{H^{s-1/2}(\partial\Omega)}\\
&\leq 2\|\text{Proj}_{W_L}p-\tilde{p}\|_{H^{s-1/2}(\partial\Omega)}
\leq 2(\|(I-\text{Proj}_{W_L})p\|_{H^{s-1/2}(\partial\Omega)}+{\tilde{\delta}})
\end{aligned}
\]
For $p\in H^{S-1/2}(\partial\Omega)$, with sufficiently large regularity offset $S-s>0$, the projection error $\|(I-\text{Proj}_{W_L})p\|_{H^{s-1/2}(\partial\Omega)}$ tends to zero fast enough so that (in spite of the growth of $\kappa_L$) also $e^W_L:=\kappa_L\|(I-\text{Proj}_{W_L})p\|_{H^{s-1/2}(\partial\Omega)}\to0$ as $L\to\infty$. Thus we have 
\[
\begin{aligned}
\|v_L^{\tilde{\delta}}-\Tinv p\|_{H^s(\Omega)}
\leq {e^V_L} + 2 {e^W_L} + 2\kappa_L{\tilde{\delta}}
\end{aligned}
\]
and therefore a choice of $L=L({\tilde{\delta}})$ such that $L({\tilde{\delta}})\to\infty$ and $\kappa_{L({\tilde{\delta}})}{\tilde{\delta}}\to0$ as ${\tilde{\delta}}\to0$ -- ideally by balancing $e^V_{L({\tilde{\delta}})} + e^W_{L({\tilde{\delta}})}\sim \kappa_{L({\tilde{\delta}})}{\tilde{\delta}}$ -- we obtain the desired result with 
$\delta({\tilde{\delta}}):={e^V_{L({\tilde{\delta}})}} + 2 {e^W_{L({\tilde{\delta}})}} + 2\kappa_{L({\tilde{\delta}})}{\tilde{\delta}}\to0$ as ${\tilde{\delta}}\to0$.
\end{remark}

\section*{Acknowledgment}
This research was funded in part by the Austrian Science Fund (FWF) 
[10.55776/P36318]. 
For open access purposes, the author has applied a CC BY public copyright license to any author accepted manuscript version arising from this submission.


\begin{thebibliography}{10}

\bibitem{AcostaPalacios2025}
{\sc S.~Acosta and B.~Palacios}, {\em Simultaneous determination of wave speed,
  diffusivity and nonlinearity in the {W}estervelt equation using complex
  time-periodic solutions}, SIAM J. Appl. Math,  (2025).
\newblock see also arXiv 2509.10718 [math.AP].

\bibitem{AcostaUhlmannZhai2021}
{\sc S.~Acosta, G.~Uhlmann, and J.~Zhai}, {\em Nonlinear ultrasound imaging
  modeled by a {W}estervelt equation}, SIAM J. Appl. Math., 82 (2022),
  pp.~408--426, \url{https://doi.org/10.1137/21M1431813}.

\bibitem{AmesClark_etal:1998}
{\sc K.~A. Ames, G.~W. Clark, J.~F. Epperson, and S.~F. Oppenheimer}, {\em A
  comparison of regularizations for an ill-posed problem}, Math. Comp., 67
  (1998), pp.~1451--1471, \url{https://doi.org/10.1090/S0025-5718-98-01014-X}.

\bibitem{BachingerLangerSchoeberl:2005}
{\sc F.~Bachinger, U.~Langer, and J.~Sch\"{o}berl}, {\em Numerical analysis of
  nonlinear multiharmonic eddy current problems}, Numer. Math., 100 (2005),
  pp.~593--616, \url{https://doi.org/10.1007/s00211-005-0597-2}.

\bibitem{nonlinparam1}
{\sc R.~T. Beyer}, {\em {{Parameter of nonlinearity in fluids}}}, The Journal
  of the Acoustical Society of America, 32 (2005), pp.~719--721,
  \url{https://doi.org/10.1121/1.1908195}.

\bibitem{BongartiCharoenphonLasiecka20}
{\sc M.~Bongarti, S.~Charoenphon, and I.~Lasiecka}, {\em Singular thermal
  relaxation limit for the {M}oore-{G}ibson-{T}hompson equation arising in
  propagation of acoustic waves}, in Semigroups of operators---theory and
  applications, vol.~325 of Springer Proc. Math. Stat., Springer, Cham, 2020,
  pp.~147--182, \url{https://doi.org/10.1007/978-3-030-46079-2\_9}.

\bibitem{ChadanColtonPaivarintaRundell:1997}
{\sc K.~Chadan, D.~Colton, L.~P{\"a}iv{\"a}rinta, and W.~Rundell}, {\em An
  introduction to inverse scattering and inverse spectral problems}, vol.~2 of
  SIAM Monogr. Math. Model. Comput., Philadelphia, PA: SIAM, 1997,
  \url{https://doi.org/10.1137/1.9780898719710}.

\bibitem{ClarkOppenheimer:1994}
{\sc G.~W. Clark and S.~F. Oppenheimer}, {\em Quasireversibility methods for
  non-well-posed problems}, Electron. J. Differential Equations,  (1994),
  pp.~No.\ 08, approx.\ 9.

\bibitem{DellOroPata}
{\sc F.~Dell$'$Oro and V.~Pata}, {\em On the {M}oore--{G}ibson--{T}hompson
  equation and its relation to linear viscoelasticity}, Applied Mathematics \&
  Optimization, 76 (2017), pp.~641--655.

\bibitem{duck2002nonlinear}
{\sc F.~A. Duck}, {\em {Nonlinear acoustics in diagnostic ultrasound}},
  Ultrasound in medicine \& biology, 28 (2002), pp.~1--18.

\bibitem{EnglHankeNeubauer:1996}
{\sc H.~W. Engl, M.~Hanke, and A.~Neubauer}, {\em Regularization of inverse
  problems}, vol.~375 of Math. Appl., Dordr., Dordrecht: Kluwer Academic
  Publishers, 1996.

\bibitem{Eptaminitakis:Stefanov:2023}
{\sc N.~Eptaminitakis and P.~Stefanov}, {\em Weakly nonlinear geometric optics
  for the {W}estervelt equation and recovery of the nonlinearity}, SIAM J.
  Math. Anal., 56 (2024), pp.~801--819,
  \url{https://doi.org/10.1137/22M1543379}.

\bibitem{EvansBook}
{\sc L.~C. Evans}, {\em Partial differential equations}, American Mathematical
  Society, 2010.

\bibitem{nonlinparam3}
{\sc W.~S. Gan}, {\em {B/A nonlinear parameter acoustical imaging}}, Springer
  Singapore, Singapore, 2021, pp.~37--48,
  \url{https://doi.org/10.1007/978-981-16-7015-2_6}.

\bibitem{GroetschNeubauer:1988}
{\sc C.~W. Groetsch and A.~Neubauer}, {\em Convergence of a general projection
  method for an operator equation of the first kind}, Houston J. Math., 14
  (1988), pp.~201--208.

\bibitem{projBanach}
{\sc U.~H\"amarik, B.~Kaltenbacher, U.~Kangro, and E.~Resmerita}, {\em
  {Regularization by discretization in Banach spaces }}, Inverse Problems, 32
  (2016), p.~035004, \url{https://doi.org/10.1088/0266-5611/32/3/035004}.
\newblock see also arxiv:1506.05425 [math.NA].

\bibitem{Hanke:2017}
{\sc M.~Hanke}, {\em A Taste of Inverse Problems}, Society for Industrial and
  Applied Mathematics, Philadelphia, PA, 2017,
  \url{https://doi.org/10.1137/1.9781611974942}.

\bibitem{JiangLiPauronYamamoto2023}
{\sc D.~Jiang, Z.~Li, M.~Pauron, and M.~Yamamoto}, {\em Uniqueness for
  fractional nonsymmetric diffusion equations and an application to an inverse
  source problem}, Mathematical Methods in the Applied Sciences, 46 (2023),
  pp.~2275--2287, \url{https://doi.org/10.1002/mma.8644},
  \url{https://arxiv.org/abs/https://onlinelibrary.wiley.com/doi/pdf/10.1002/mma.8644}.

\bibitem{periodicWestervelt}
{\sc B.~Kaltenbacher}, {\em Periodic solutions and multiharmonic expansions for
  the {W}estervelt equation}, Evolution Equations and Control Theory EECT, 10
  (2021), pp.~229--247, \url{https://doi.org/10.3934/eect.2020063}.

\bibitem{nonlinearity_imaging_JMGT}
{\sc B.~Kaltenbacher}, {\em Identifiability of some space dependent
  coefficients in a wave equation of nonlinear acoustics}, Evolution Equations
  and Control Theory,  (2023), \url{https://doi.org/10.3934/eect.2023052}.
\newblock see also arXiv:2305.04110 [math.AP].

\bibitem{nonlinear_imaging_JMGT_freq}
{\sc B.~Kaltenbacher}, {\em {Acoustic nonlinearity parameter tomography with
  the Jordan–Moore–Gibson–Thompson equation in frequency domain}},
  Inverse Problems, 41 (2025), p.~095010,
  \url{https://doi.org/10.1088/1361-6420/ae0518},
  \url{https://doi.org/10.1088/1361-6420/ae0518}.
\newblock see also arXiv:2502.05810 [math.AP].

\bibitem{periodicJMGT}
{\sc B.~Kaltenbacher}, {\em {Well-posedness of the time-periodic
  Jordan–Moore–Gibson–Thompson equation}}, Nonlinear Analysis: Real World
  Applications, 86 (2025), p.~104407,
  \url{https://doi.org/10.1016/j.nonrwa.2025.104407}.
\newblock see also arXiv:2409.05355 [math.AP].

\bibitem{KLP12_JordanMooreGibson}
{\sc B.~Kaltenbacher, I.~Lasiecka, and M.~K. Pospieszalska}, {\em
  Well-posedness and exponential decay of the energy in the nonlinear
  {J}ordan--{M}oore--{G}ibson--{T}hompson equation arising in high intensity
  ultrasound}, Mathematical Models and Methods in Applied Sciences, 22 (2012),
  p.~1250035.

\bibitem{JMGT}
{\sc B.~Kaltenbacher and V.~Nikoli\'{c}}, {\em {On the
  Jordan-Moore-Gibson-Thompson equation: well-posedness with quadratic gradient
  nonlinearity and singular limit for vanishing relaxation time}}, Mathematical
  Models and Methods in the Applied Sciences M3AS, 29 (2019), pp.~2523--2556,
  \url{https://doi.org/10.1142/S0218202519500532}.
\newblock see also arxiv:1901.02795 [math.AP].

\bibitem{nonlinearity_imaging_Westervelt}
{\sc B.~Kaltenbacher and W.~Rundell}, {\em On the identification of the
  nonlinearity parameter in the {W}estervelt equation from boundary
  measurements}, Inverse Problems \& Imaging, 15 (2021), pp.~865--891,
  \url{https://doi.org/10.3934/ipi.2021020}.

\bibitem{nonlinearity_imaging_fracWest}
{\sc B.~Kaltenbacher and W.~Rundell}, {\em On an inverse problem of nonlinear
  imaging with fractional damping}, Mathematics of Computation, 91 (2022),
  pp.~245--276, \url{https://doi.org/10.1090/mcom/3683}.
\newblock see also arXiv:2103.08965 [math.AP].

\bibitem{frac_book}
{\sc B.~Kaltenbacher and W.~Rundell}, {\em Inverse Problems for Fractional
  Partial Differential Equations}, no.~230 in Graduate Studies in Mathematics,
  AMS, 2023, \url{https://doi.org/10.1090/gsm/230}.

\bibitem{nonlinearity_imaging_2d}
{\sc B.~Kaltenbacher and W.~Rundell}, {\em Nonlinearity parameter imaging in
  the frequency domain}, Inverse Problems and Imaging, 18 (2023), pp.~388--405,
  \url{https://doi.org/10.3934/ipi.2023037}.
\newblock see also arXiv:2303.09796 [math.NA].

\bibitem{KatchalovKurylevLassas:2001}
{\sc A.~Katchalov, Y.~Kurylev, and M.~Lassas}, {\em Inverse boundary spectral
  problems}, vol.~123 of Chapman Hall/CRC Monogr. Surv. Pure Appl. Math., Boca
  Raton, FL: CRC Press, 2001.

\bibitem{KenigPonceVega1993}
{\sc C.~E. Kenig, G.~Ponce, and L.~Vega}, {\em Well-posedness and scattering
  results for the generalized {K}orteweg-de {V}ries equation via the
  contraction principle}, Comm. Pure Appl. Math., 46 (1993), pp.~527--620,
  \url{https://doi.org/10.1002/cpa.3160460405}.

\bibitem{Kirsch2021}
{\sc A.~Kirsch}, {\em Regularization by Discretization}, Springer International
  Publishing, Cham, 2021, pp.~63--118,
  \url{https://doi.org/10.1007/978-3-030-63343-1_3}.

\bibitem{KlannRamlau2008}
{\sc E.~Klann and R.~Ramlau}, {\em Regularization by fractional filter methods
  and data smoothing}, Inverse Probl., 24 (2008), p.~26,
  \url{https://doi.org/10.1088/0266-5611/24/2/025018}.
\newblock Id/No 025018.

\bibitem{KurylevLassasUhlmann2018}
{\sc Y.~Kurylev, M.~Lassas, and G.~Uhlmann}, {\em Inverse problems for
  {L}orentzian manifolds and non-linear hyperbolic equations}, Inventiones
  mathematicae, 212 (2018), pp.~781--857,
  \url{https://doi.org/10.1007/s00222-017-0780-y}.

\bibitem{LasieckaWang15b}
{\sc I.~Lasiecka and X.~Wang}, {\em Moore--{G}ibson--{T}hompson equation with
  memory, part {II}: {G}eneral decay of energy}, Journal of Differential
  Equations, 259 (2015), pp.~7610--7635.

\bibitem{LattesLions:1969}
{\sc R.~Latt{\`e}s and J.-L. Lions}, {\em The {M}ethod of
  {Q}uasi-{R}eversibility. {A}pplications to {P}artial {D}ifferential
  {E}quations}, Elsevier, New York, 1969.

\bibitem{Lucka2022}
{\sc F.~Lucka, M.~Pérez-Liva, B.~E. Treeby, and B.~T. Cox}, {\em High
  resolution 3d ultrasonic breast imaging by time-domain full waveform
  inversion}, Inverse Problems, 38 (2021), p.~025008,
  \url{https://doi.org/10.1088/1361-6420/ac3b64}.

\bibitem{MarchandMcDevittTriggiani12}
{\sc R.~Marchand, T.~McDevitt, and R.~Triggiani}, {\em An abstract semigroup
  approach to the third-order {M}oore--{G}ibson--{T}hompson partial
  differential equation arising in high-intensity ultrasound: structural
  decomposition, spectral analysis, exponential stability}, Mathematical
  Methods in the Applied Sciences,  (2012), pp.~1896--1929.

\bibitem{nonlinparam2}
{\sc K.~A. Naugol’nykh, L.~A. Ostrovsky, E.~A. Zabolotskaya, and M.~A.
  Breazeale}, {\em {Nonlinear acoustics}}, The Journal of the Acoustical
  Society of America, 99 (1996), pp.~1815--1815,
  \url{https://doi.org/10.1121/1.415329}.

\bibitem{NikolicRauscher2025}
{\sc V.~Nikoli\'{c} and T.~Rauscher}, {\em Multiharmonic algorithms for
  contrast-enhanced ultrasound},  (2025).
\newblock arXiv:2504.13335 [math.NA].

\bibitem{PellicerSolaMorales}
{\sc M.~Pellicer and J.~Sol\'a-Morales}, {\em Optimal scalar products in the
  {M}oore-{G}ibson-{T}hompson equation}, Evolution Equations \& Control Theory,
  8 (2019), pp.~203--220.

\bibitem{periodicWest_2}
{\sc B.~Rainer and B.~Kaltenbacher}, {\em {On the nonlinear periodic Westervelt
  equation}}, ESAIM: Mathematical Modelling and Numerical Analysis, 59 (2025),
  pp.~2279 -- 2304, \url{https://doi.org/10.1051/m2an/2025059}.
\newblock see also arXiv:2407.17043 [math.AP].

\bibitem{periodicWest_3}
{\sc B.~Rainer and B.~Kaltenbacher}, {\em {Multi-parameter identification in
  the nonlinear periodic Westervelt}},  (2026).
\newblock in preparation.

\bibitem{schmitt2002ultrasound}
{\sc R.~M. Schmitt}, {\em Ultrasound computed tomography: from the past to the
  future}, in Acoustical Imaging, Springer, 2002, pp.~25--35.

\bibitem{Showalter:1974a}
{\sc R.~E. Showalter}, {\em The final value problem for evolution equations},
  J. Math. Anal. Appl., 47 (1974), pp.~563--572,
  \url{https://doi.org/10.1016/0022-247X(74)90008-0}.

\bibitem{Showalter:1974b}
{\sc R.~E. Showalter}, {\em Quasi-reversibility of first and second order
  parabolic evolution equations}, in Improperly posed boundary value problems
  ({C}onf., {U}niv. {N}ew {M}exico, {A}lbuquerque, {N}.{M}., 1974), Pitman,
  London, 1975, pp.~76--84. Res. Notes in Math., No. 1.

\bibitem{Showalter:1976}
{\sc R.~E. Showalter}, {\em Regularization and approximation of second order
  evolution equations}, SIAM J. Math. Anal., 7 (1976), pp.~461--472,
  \url{https://doi.org/10.1137/0507037}.

\bibitem{Szabo2014}
{\sc T.~L. Szabo}, {\em Chapter 7 - array beamforming}, in Diagnostic
  Ultrasound Imaging: Inside Out (Second Edition), T.~L. Szabo, ed., Academic
  Press, Boston, second edition~ed., 2014, pp.~209--255,
  \url{https://doi.org/10.1016/B978-0-12-396487-8.00007-0}.

\bibitem{VainikkoHaemarik:1985}
{\sc G.~M. Vainikko and U.~A. H\"amarik}, {\em Projection methods and
  self-regularization in ill-posed problems}, Sov. Math., 29 (1985), pp.~1--20.

\bibitem{ZHANG20011359}
{\sc D.~Zhang, X.~fen Gong, and X.~Chen}, {\em {Experimental imaging of the
  acoustic nonlinearity parameter B/A for biological tissues via a parametric
  array}}, Ultrasound in Medicine \& Biology, 27 (2001), pp.~1359--1365,
  \url{https://doi.org/10.1016/S0301-5629(01)00432-X}.

\end{thebibliography}

\end{document}